\newtheorem{proposition}{Proposition}[section]
\newtheorem{lemma}[proposition]{Lemma}
\newtheorem{corollary}[proposition]{Corollary}
\newtheorem{theorem}[proposition]{Theorem}
\newtheorem{remark}[proposition]{Remark}
\theoremstyle{definition}
\newcommand{\selabel}[1]{\label{se:#1}}
\def\<{\leq}
\def\>{\geq}
\def\e{\varepsilon}
\def\ot{\otimes}
\date{}
\begin{document}
\title{The Green rings of the generalized Taft Hopf algebras}
\author{Libin Li}
\address{School of Mathematical Science, Yangzhou University,
Yangzhou 225002, China}
\email{lbli@yzu.edu.cn}
\author{Yinhuo Zhang}
\address{Department WNI, University of Hasselt, Universitaire Campus, 3590 Diepeenbeek,Belgium }
\email{yinhuo.zhang@uhasselt.be}
\subjclass[2000]{16W30, 19A22}
\keywords{Green ring, indecomposable module, generalized Taft algebra, nilpotent element}
\begin{abstract}
In this paper, we investigate the Green ring $r(H_{n,d})$ of the
generalized Taft algebra $H_{n,d}$, extending the results of Chen,
Van Oystaeyen and Zhang in \cite{Coz}. We shall determine all
nilpotent elements of the Green ring $r(H_{n,d})$. It turns out that
each nilpotent element in  $r(H_{n,d})$ can be written as a sum of
indecomposable projective representations. The Jacobson radical
$J(r(H_{n,d}))$ of $r(H_{n,d})$ is generated by one element,
 and its rank is $n-n/d$. Moreover, we will present all the finite
dimensional indecomposable representations over the complexified
Green ring $R(H_{n,d})$ of $H_{n,d}.$  Our analysis is based on the
decomposition of the tensor product of indecomposable
representations and the observation of the solutions for the system of  equations associated to the generating relations of the Green ring $r(H_{n,d})$.
\end{abstract}

\maketitle
\section{\bf Introduction}

Let $\mathcal{C}$ be the category of finite dimensional
representations of a Hopf algebra $H$ over a field $K$. In the study
of the monoidal structure of $\mathcal{C}$ one has to consider the
decomposition of  the tensor  product of  representations in
$\mathcal{C},$ in particular, the tensor product of two
indecomposable representations in $\mathcal{C}.$  However, in
general, very little is known about how a tensor product of
two indecomposable representations decomposes into a direct sum of
indecomposable representations. One method of addressing this problem is
to consider the tensor product as the multiplication of the  Green ring (or
the representation ring) $r(H)$, and  to study the
ring properties of the Green ring. In general,  it is relatively easy  to explore the ring structure of the complexified Green ring $R(H)$ as we shall see in this paper.

A lot of work have been done in this direction. Firstly, Green \cite{Green, GR}, Benson and Carlson, etc., considered the semi-simplicity  of the
representation ring $r(KG)$ for modular representations of a finite
group $G$ (see \cite{B}). One of the interesting results they obtained is  that $KG$ is of finite representation type if and only if there are no nilpotent elements in $r(KG)$. In general, it is difficult to determine all nilpotent elements of $r(KG)$ if $KG$ is of infinite representation type (see \cite{Ar, He, Re}). For the Green rings of Hopf algebras,
if $H$ is a finite dimensional semi-simple Hopf algebra, then the Green ring $r(H)$ is equal to the Grothendieck ring and is semi-simple (see, e.g. \cite{Lo,With}). If $H$ is the enveloping algebra of a complex semi-simple Lie algebra, the Green ring has been studied by Cartan and Weyl (see \cite{FH}). Here We would like to mention the recent work by Sergeev and Veselov
for basic classical Lie superalgebras (see \cite{SV}).
Darp\"o and Herschend have presented a general description of the Green ring of the polynomial Hopf algebra $K[x]$ in \cite{DH} in case the ground field $K$ is perfect. For representation rings of quantum algebras, we refer to the work by Domokos and Lenagan (see \cite{DL}) and the work by Chin (see \cite{Chin}).

 Recently,  Cibils in \cite{Cib} determined all the graded Hopf algebras on a cycle path coalgebra (which are just equal to the generalized Taft algebras (see \cite{R,Ta})),  and considered the decomposition of two indecomposable representations (see also \cite{Gunn}). Moreover,
 Cibils also computed the Green ring of the Sweedler $4$-dimensional Hopf algebra by generators and relations  and asked how to compute  the Green ring of $K\mathbb{Z}_n(q)/I_d$. In \cite{W},  Wakui computed the Green
 rings of all non semisimple Hopf algebras of dimension 8, one of which is $K\mathbb{Z}_4(q)/I_2$, over an algebraically closed field $K$ of characteristic 0
 by generators and relations.
More recently, Chen, Van Oystaeyen and Zhang (see \cite{Coz}) has explicitly
described the Green ring $r(H_n)$ of the Taft Hopf algebra
$H_n$ by two generators and two relations.

The aforementioned works motivate us to investigate the structure of the Green ring $r(H_{n,d})$ of the generalized Taft Hopf algebra $H_{n,d},$
which in turn  could be helpful to understand the structure and the classification of non-semisimple monoidal categories. It is well-known that
the pointed Hopf algebra $H_{n,d}$ is neither commutative nor cocommutative, and is even not quasitriangular in general.  However, it is of finite
representation type, and monomial,  that is, all the relations are given by paths.  Thus the study of the Green ring of $H_{n,d}$ can help us to understand more the tensor categories of finite representation type.

The main aim of this paper is to compute the generators and generating relations of  the Green ring $r(H_{n,d})$, to
determine explicitly all nilpotent elements of  $r(H_{n,d})$, and to give all finite dimensional indecomposable representations of the
complexified Green ring  $R(H_{n,d})$.  The paper is organized as follows.  In Section 2, we recall the  definitions and the basic representation theory of  the generalized Taft algebra $H_{n,d}$.  There are $nd$ non-isomorphic finite dimensional indecomposable modules over $H_{n,d}.$  For each $1\leqslant l\leqslant d$, there are exactly $n$ indecomposable
$H_{n,d}$-modules with dimension $l$. Every indecomposable
projective $H_{n,d}$-module is $d$-dimensional. In section 3, we
prove that  the Green ring $r(H_{n,d}) $ of $H_{n,d}$ is generated by two elements subject to two relations, one of which is associated with a generalized Fibonacci polynomial (see Theorem 3.2).

In order to describe all nilpotent elements and the structure of the Green rings $r(H_{n,d})$ and $R(H_{n,d})$, we compute, in Section 4,
explicitly the roots of the generalized Fibonacci polynomials appeared in the generating relations of the Green rings $r(H_{n,d})$.
It turns out that the generalized Fibonacci polynomial $F_n(a, x)$ has $n-1$ distinct complex roots (see Proposition 4.2 for detail).

In section 5, we analyze the distinct  solutions of the system of
equations associated to the aforementioned generalized Fibonacci
polynomial. By using a one to one correspondence between the set of
solutions for the system of equations and the set of the isomorphism
classes of irreducible modules over $R(H_{n,d})$,  we obtain
$nd-n+\frac{n}{d}$ irreducible modules of dimension 1 over
$R(H_{n,d})$. This gives us a description of all nilpotent elements
of $r(H_{n,d})$, and concludes that the Jacobson radical
$J(r(H_{n,d}))$ is a principal ideal (i.e. generated by one
element). Moreover, we will determine all finite dimensional
reducible and indecomposable modules over the Green ring $H_{n,d}$.
It is interesting that the number of indecomposable representations
of the Green ring $R(H_{n,d})$ and that of $H_{n,d}$ are both equal
to $nd$. In the final section, we apply the same technique used on
$r(H_{n,d})$ to describe all nilpotent elements and finite
dimensional indecomposable representations of the projective class
ring of $H_{n,d}$. Cibils studied the projective class rings in
\cite{CC} for basic and split Hopf algebras.

Throughout, we work over a fixed algebraically closed field $K$ of
characteristic 0. Unless otherwise stated, all algebras, Hopf
algebras and modules are defined over $K$; all modules are left
modules and finite dimensional; all maps are $K$-linear; dim and
$\otimes$ stand for $\mbox{\rm dim}_K$ and $\otimes_K$,
respectively. For the theory of Hopf algebras and quantum groups, we
refer to \cite{Ka, Maj, Mon, Sw}.

{\section{\bf Representation theory of the generalized Taft algebra $H_{n,d}$}\selabel{2}}

In the sequel, we fix  two integers $n,d\geqslant 2$ such that $d\mid n$. Assume that $q$ is a primitive $d$-th root of unity.  In
\cite{R} (see also \cite{AS}) Radford considered the following Hopf
algebra $H_{n,d}=H_{n,d}(q)$  generated by two elements $g$ and $h$
subject to the relations:
$$g^n=1,\ h^d=0,\ hg=qgh.$$
The algebra $H_{n,d}$ is a Hopf algebra with comultiplication
$\Delta$, counit $\e$, and antipode $S$ given by
$$\begin{array}{lll}
\Delta(g)=g\ot g,\ \   &
\Delta(h)=1\ot h+h\ot g,&\e(g)=1,\\
\e(h)=0,&S(g)=g^{-1}=g^{n-1},& S(h)=-q^{-1}g^{n-1}h.\\
\end{array}$$
Note that the dimension of $H_{n,d}$ is $dn$,  and the set $\{g^ih^j|0\leqslant i\leq n-1,0\leq j\leqslant d-1\}$ forms a PBW basis for $H_{n,d}$.

In case $d=n$, then $H_n=H_{n,n}$ is the $n^2$-dimensional Taft (Hopf) algebra (see \cite{Ta}). For this reason, $H_{n,d}$ is  called
a generalized Taft algebra in \cite{chyz,Hcp}. The Hopf algebra $H_{n,d}$
can be also approached by quiver and relations, that is, $H_{n,d}$ is isomorphic to the quiver quantum group $KZ_n(q)/I_d$ constructed by Cibils in
\cite{Cib}.

In order to compute the Green ring of $H_{n,d}$, we need to recall the classification of finite dimensional indecomposable representations of $H_{n,d}$.  Thanks to Theorem 4.3 in \cite{chyz}, we have the following structures of $H_{n,d}$ as algebras. Throughtout the paper, we let $m=n/d$. Then
\begin{lemma} Let $H_d$ be the Taft algebra of dimension $d^2$. Then
$$H_{n,d}\cong \stackrel{m\,\, \mathrm{copies}}{\overbrace{H_d\times H_d\times \cdots \times H_d}}.\ \ \ \ \ \  \hfill\Box $$
\end{lemma}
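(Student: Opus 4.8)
The plan is to produce the decomposition directly from the algebra presentation, using that $q^d=1$ to manufacture central idempotents, and then to pin down each block by a dimension count. First I would observe that since $q$ is a primitive $d$-th root of unity, the relation $hg=qgh$ iterates to $hg^d=q^dg^dh=g^dh$, so $g^d$ lies in the center $Z(H_{n,d})$. Because $1,g,\dots,g^{n-1}$ occur in the PBW basis, $g$ has order exactly $n$, hence $g^d$ has order exactly $m=n/d$, and the central subalgebra $K[g^d]$ is isomorphic to $K[\mathbb{Z}_m]$. As $K$ is algebraically closed of characteristic $0$, this splits: fixing a primitive $m$-th root of unity $\zeta$, we get orthogonal central idempotents $f_0,\dots,f_{m-1}$ with $\sum_j f_j=1$ and $g^df_j=\zeta^jf_j$, and therefore a product decomposition of algebras $H_{n,d}=\prod_{j=0}^{m-1}f_jH_{n,d}$.

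Next I would identify each block $f_jH_{n,d}$ with the Taft algebra $H_d$. Choosing $\mu_j\in K$ with $\mu_j^d=\zeta^j$, set $x_j=\mu_j^{-1}gf_j$ and $y_j=hf_j$ inside the block $f_jH_{n,d}$. Since $f_j$ is central and idempotent one checks $x_j^d=\mu_j^{-d}g^df_j=f_j$ (the unit of the block), $y_j^d=h^df_j=0$, and $y_jx_j=\mu_j^{-1}hgf_j=q\mu_j^{-1}ghf_j=qx_jy_j$; these are exactly the defining relations of $H_d$ with the same parameter $q$. As $H_{n,d}$ is generated by $g,h$, the block $f_jH_{n,d}$ is generated by $gf_j,hf_j$, hence by $x_j,y_j$, so one obtains a surjective algebra map $H_d\twoheadrightarrow f_jH_{n,d}$ for each $j$.

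Finally I would conclude by comparing dimensions. Each surjection gives $\dim f_jH_{n,d}\leq\dim H_d=d^2$, while $\sum_{j=0}^{m-1}\dim f_jH_{n,d}=\dim H_{n,d}=nd=md^2$; hence every block has dimension exactly $d^2$, each surjection $H_d\to f_jH_{n,d}$ is an isomorphism, and assembling the blocks yields $H_{n,d}\cong H_d\times H_d\times\cdots\times H_d$ ($m$ copies) as algebras.

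There is no serious obstacle here — the only places where the hypotheses on $K$ are genuinely used are the splitting of $K[g^d]$ into a product of fields and the existence of the scalars $\mu_j$ — so the ``hard part'' is merely keeping the bookkeeping with the idempotents $f_j$ and the rescalings $\mu_j$ tidy; this is precisely Theorem~4.3 of \cite{chyz}.
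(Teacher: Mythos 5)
Your argument is correct and complete: $g^d$ is central because $q^d=1$, the group algebra $K[g^d]\cong K[\mathbb{Z}_m]$ splits into orthogonal central idempotents $f_j$, the rescaled generators $x_j=\mu_j^{-1}gf_j$ and $y_j=hf_j$ satisfy exactly the Taft relations in each block, and the dimension count $\sum_j\dim f_jH_{n,d}=nd=md^2$ forces each surjection $H_d\twoheadrightarrow f_jH_{n,d}$ to be an isomorphism. The paper itself offers no proof of this lemma, deferring entirely to Theorem~4.3 of \cite{chyz}, so your write-up is a self-contained version of what is essentially the standard argument there; nothing is missing.
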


Recall that an algebra A is called Nakayama if each indecomposable
projective left and right module has a unique composition series.
Since $H_{n,d}$  is a Nakayama algebra, its representation theory is
not difficult to describe (see \cite{ARS,Cib, Hcp}). In \cite{Cib}
Cibils  classified the indecomposable modules over $kZ_n(q)/I_d$,
and gave the decomposition formulas of the tensor product of two arbitrary
indecomposable modules. However, using the forms of the presentations in
\cite{Cib}, it is hard to work out explicitly the generators and the
generating relations for the Green rings of $H_{n,d}$. For this reason,
we will rewrite in the following  all indecomposable modules over
$H_{n,d}$, and  reformulate the decomposition formulas of the tensor
product of two  indecomposable modules.

Let ${\mathbb Z}_n:={\mathbb Z}/(n)$. For  $1\leqslant l\leqslant
d$ and $i\in \mathbb{Z}_n,$ denote by $M(l, i)$ the $k$-vector space
with a $K$-basis $\{v_0, v_1, \cdots, v_{l-1}\}.$  Let $\omega$  be
a primitive $n$-th root of unity so that $\omega^m=q.$ Define an
action of $H_{n,d}$ on $M(l, i)$ as follows:\\
 $g\cdot v_j=\omega^iq^{-j}v_j$
for all $0\leqslant j\leqslant l-1$ and
$$h\cdot v_j=\left\{\begin{array}{ll}
v_{j+1},& 0\leqslant j\leqslant l-2,\\
0,& j=l-1.\\
\end{array}\right.
$$
Then $M(l, i)$ is an indecomposable module and the set $\{M(l,i)\ | i\in{\mathbb Z}_n, 1\leqslant l\leqslant d\}$
forms a complete list of non-isomorphic indecomposable $H_{n,d}$-modules. Moreover, $M(l,i)$ is simple if and only if $l=1$; and
$M(l,i)$ is projective indecomposable if and only if $l=d$ (for the proof, see \cite{ Coz,Cib,Hcp}).

\begin{lemma} We have the following isomorphisms:
\begin{eqnarray*}
&& M(1, i) \otimes  M(l, r)\cong M(l, r)\otimes M(1,i)\cong M(l, r+i),\\
 && M(l, r)\cong M(1,r)\otimes M(l,0)\cong M(l,0)\otimes M(1,r)
 \end{eqnarray*}
for all $1\leqslant l\leqslant d$ and $i, r\in{\mathbb Z}_n$.
\hfill$\Box$
\end{lemma}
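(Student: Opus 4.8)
The plan is to establish all of the isomorphisms by an explicit change of basis, using the Hopf-algebraic description of the tensor product of modules. For $H_{n,d}$-modules $U$ and $W$ the action on $U\otimes W$ is determined by $\Delta(g)=g\otimes g$ and $\Delta(h)=1\otimes h+h\otimes g$, so that $g\cdot(u\otimes w)=gu\otimes gw$ and $h\cdot(u\otimes w)=u\otimes hw+hu\otimes gw$. Since $M(1,i)\otimes M(l,r)$ and $M(l,r)\otimes M(1,i)$ both have dimension $l$, it will suffice to produce in each of them a basis $\{w_0,\dots,w_{l-1}\}$ on which $g$ acts by $g\cdot w_j=\omega^{\,r+i}q^{-j}w_j$ and $h$ acts by $h\cdot w_j=w_{j+1}$ (with $w_l=0$), i.e. the defining formulas of $M(l,r+i)$; the assignment $w_j\mapsto v_j$ then gives the desired isomorphism.

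First I would treat $M(1,i)\otimes M(l,r)$. Writing $\{u_0\}$ for the basis of $M(1,i)$, so $g u_0=\omega^i u_0$ and $h u_0=0$, and $\{v_0,\dots,v_{l-1}\}$ for the basis of $M(l,r)$, set $w_j:=u_0\otimes v_j$. Then $g\cdot w_j=\omega^i u_0\otimes \omega^r q^{-j}v_j=\omega^{\,r+i}q^{-j}w_j$, and $h\cdot w_j=u_0\otimes hv_j+hu_0\otimes gv_j=u_0\otimes v_{j+1}=w_{j+1}$, the second summand vanishing because $hu_0=0$. Hence $M(1,i)\otimes M(l,r)\cong M(l,r+i)$.

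The factor $M(l,r)\otimes M(1,i)$ is the only place where the non-cocommutativity of $H_{n,d}$ intervenes, and this is the step requiring a little care. Here $h\cdot(v_j\otimes u_0)=v_j\otimes hu_0+hv_j\otimes gu_0=\omega^i(v_{j+1}\otimes u_0)$, so the obvious basis $\{v_j\otimes u_0\}$ fails the normalization $h\cdot w_j=w_{j+1}$ by the scalar $\omega^i$. To absorb it, put $w_j:=\omega^{ij}(v_j\otimes u_0)$; then $g\cdot w_j=\omega^{\,r+i}q^{-j}w_j$ and $h\cdot w_j=\omega^{ij}\cdot\omega^i(v_{j+1}\otimes u_0)=\omega^{i(j+1)}(v_{j+1}\otimes u_0)=w_{j+1}$, which gives $M(l,r)\otimes M(1,i)\cong M(l,r+i)$ and completes the first displayed line. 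The second line is then the special case of the first obtained by substituting $r\mapsto 0$ and $i\mapsto r$: $M(1,r)\otimes M(l,0)\cong M(l,0)\otimes M(1,r)\cong M(l,r)$.

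I do not anticipate any genuine obstacle; the verification is mechanical once the comultiplication is used. The only things to keep in mind are that the scalars are indexed cyclically (recall $\omega$ is a primitive $n$-th root of unity with $\omega^m=q$, so $q^{-j}$ depends only on $j$ modulo $d$, while $\omega^i$ depends on $i$ modulo $n$), and that in the second tensor order one must rescale the basis by the powers $\omega^{ij}$ in order to land exactly in the standard presentation of $M(l,r+i)$.
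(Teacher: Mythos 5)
Your proof is correct, and since the paper states this lemma without proof (it is the standard direct verification using $\Delta(g)=g\otimes g$ and $\Delta(h)=1\otimes h+h\otimes g$), your explicit computation — including the rescaling $w_j=\omega^{ij}(v_j\otimes u_0)$ needed in the non-cocommutative order — is exactly the intended argument.
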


To find the decomposition of the tensor product $M(l,i)\otimes M(s,
j)$, it is enough to find the decomposition of the tensor product
$M(l,0)\otimes M(s, 0)$. The following proposition tell us that it
suffices to give an explicit evaluation of the tensor product of any
indecomposable representation with the two-dimensional
indecomposable representation $M(2,0).$

\begin{proposition}  We have the following isomorphisms:
\begin{enumerate}
\item  $M(2,0)\otimes M(l, 0)\cong M(l+1,0)\bigoplus M(l-1,-m)$
for all $2\leqslant l\leqslant d-1$ and $d> 2.$
\item  $M(2,0)\otimes M(d,0)\cong M(d,0)\bigoplus M(d,-m)$.
\end{enumerate}
\end{proposition}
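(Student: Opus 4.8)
The plan is to work directly with the explicit module structures $M(l,0)$ and $M(2,0)$ given above, tensor the underlying vector spaces, and then locate inside $M(2,0)\otimes M(l,0)$ a submodule and a quotient (or a direct sum decomposition) whose isomorphism types we can read off from the $g$- and $h$-eigenvalue data. Concretely, write a basis $\{w_0,w_1\}$ for $M(2,0)$ with $g\cdot w_0=w_0$, $g\cdot w_1=q^{-1}w_1$, $h\cdot w_0=w_1$, $h\cdot w_1=0$, and a basis $\{v_0,\dots,v_{l-1}\}$ for $M(l,0)$ as in the definition. The tensor product has basis $\{w_0\otimes v_j, w_1\otimes v_j\}$, and using the comultiplication $\Delta(g)=g\otimes g$, $\Delta(h)=1\otimes h + h\otimes g$ one computes the $H_{n,d}$-action: $g$ acts on $w_0\otimes v_j$ by $q^{-j}$ and on $w_1\otimes v_j$ by $q^{-j-1}$, while $h\cdot(w_0\otimes v_j)=w_0\otimes v_{j+1}+w_1\otimes(g\cdot v_j)=w_0\otimes v_{j+1}+q^{-j}w_1\otimes v_j$ and $h\cdot(w_1\otimes v_j)=w_1\otimes v_{j+1}$ (with the obvious truncation at $j=l-1$).

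The key step is to guess the right basis vectors generating the summands. For part (1), set $x_j=w_0\otimes v_j$ for $0\le j\le l-1$ together with the ``extra'' vector $x_l:=w_1\otimes v_{l-1}$ (up to scalar); one checks that $h\cdot x_j$ is proportional to $x_{j+1}$ for $0\le j\le l-1$ and $h\cdot x_l=0$, and that $g$ acts on $x_j$ by a single eigenvalue, so the span of $\{x_0,\dots,x_l\}$ is a submodule isomorphic to $M(l+1,0)$ after matching the $g$-eigenvalue on the top vector. For the complementary $(l-1)$-dimensional piece, take $y_j:=w_1\otimes v_j - \mu_j\, w_0\otimes v_{j+1}$ for suitable scalars $\mu_j$ (forced by demanding $h\cdot y_j \propto y_{j+1}$ and that the $y_j$ span an $h$-stable, $g$-homogeneous complement); the $g$-eigenvalue on $y_0$ will be $q^{-1}$, i.e. $\omega^{-m}q^{0}$ after writing $q^{-1}=q^{d-1}=\omega^{m(d-1)}$ — tracking this carefully is what yields the label $-m$ in $M(l-1,-m)$. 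One then verifies $K\{x_\bullet\}\oplus K\{y_\bullet\}$ exhausts the $2l$-dimensional space and that each piece is a submodule, giving the direct sum decomposition. For part (2), when $l=d$ the vector $x_d=w_1\otimes v_{d-1}$ would need $h\cdot x_{d-1}=x_d$ but in fact $h^d=0$ forces the ``long'' chain to have length only $d$, so the submodule generated by $w_0\otimes v_0$ is a copy of $M(d,0)$ (a full projective), and the quotient — or, since $M(d,0)$ is projective hence injective, the complementary submodule — is $d$-dimensional with top $g$-eigenvalue shifted by $q^{-1}$, i.e. $M(d,-m)$; here one uses that $M(d,0)$ being projective-injective makes the short exact sequence split.

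The main obstacle I expect is purely bookkeeping: getting the scalars $\mu_j$ in the complementary basis right so that the $y_j$ genuinely form an $h$-string closed under the action and carry a well-defined single $g$-eigenvalue, and then correctly converting the resulting eigenvalue $q^{-1}$ into the index $-m$ via $q=\omega^m$ and $\omega$ a primitive $n$-th root of unity (so $q^{-1}=\omega^{-m}$ on the bottom vector, whence the label $-m$). A clean way to sidestep some of this is to invoke \leref{2} and the Nakayama property: since $M(2,0)\otimes M(l,0)$ is $2l$-dimensional and, by \leref{2}, all its indecomposable summands are of the form $M(k,i)$, one only needs to (i) determine the socle or the set of $g$-eigenvalues with multiplicity, (ii) note that over a Nakayama algebra the tensor product of a module with $M(2,0)$ can have at most two indecomposable summands (controlled by the Loewy length, which is $\le d$), and (iii) match dimensions and one highest-weight eigenvalue per summand. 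That reduces the whole computation to identifying two integers and two lengths, which the eigenvalue count above supplies.
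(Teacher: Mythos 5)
Your overall strategy is the same as the paper's: compute the $g$- and $h$-action on the tensor product explicitly and exhibit two submodules of the right lengths and $g$-weights. The action formulas you derive are correct, your complementary string $y_j=w_1\otimes v_j-\mu_j\,w_0\otimes v_{j+1}$ does exist for suitable $\mu_j$ (it is, up to scalar, the paper's $e_j$), and your treatment of part (2) via projectivity--injectivity of $M(d,0)$ is a legitimate shortcut. But one step fails as written: the span of $x_0=w_0\otimes v_0,\dots,x_{l-1}=w_0\otimes v_{l-1}$, $x_l=w_1\otimes v_{l-1}$ is \emph{not} a submodule, since by your own formula $h\cdot x_j=x_{j+1}+q^{-j}\,w_1\otimes v_j$, and the term $w_1\otimes v_j$ lies outside that span for $j\le l-2$. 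The correct ``long string'' is the cyclic submodule generated by $w_0\otimes v_0$, with basis
$h^{j}\cdot(w_0\otimes v_0)=w_0\otimes v_j+(1+q^{-1}+\cdots+q^{-(j-1)})\,w_1\otimes v_{j-1}$;
that is, the $x_j$ need correction scalars exactly as the $y_j$ do. Since you also rely on ``$K\{x_\bullet\}\oplus K\{y_\bullet\}$ with each piece a submodule'' to get directness, that part needs repair as well (the paper instead uses the uniqueness of the composition series of $M(l+1,0)$ to see that the two submodules intersect trivially).

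This is not mere bookkeeping, because the scalar $1+q^{-1}+\cdots+q^{-(l-1)}$ is the crux of the proposition: it is nonzero precisely when $d\nmid l$, which is why the string through $w_0\otimes v_0$ has length $l+1$ for $2\le l\le d-1$ (part (1)) but collapses to length $d$ when $l=d$ (part (2)). Your proposed basis makes the length $l+1$ automatic and therefore cannot detect this dichotomy; the appeal to ``$h^d=0$'' in part (2) only bounds the length from above and does not explain why the bound $l+1$ is actually attained in part (1). The suggested Nakayama shortcut has the same soft spot: the number of indecomposable summands is not ``controlled by the Loewy length'' but by $\dim M/hM$, so you would still have to compute the rank of $h$ on the tensor product, which comes down to the same nonvanishing.
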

\begin{proof}
(1) For $d> 2$ and $2\leqslant l\leqslant d-1$, let $\{v_0, v_1\}$ and $\{w_0,w_1,\cdots, w_{l-1}\}$ be the standard basis of indecomposable representations $M(2,0)$ and $M(l, 0)$ respectively.
Consider the submodule $V$ of $M(2,0)\otimes M(l, 0)$ generated by $v_0\otimes w_0$.
 From  the action $g\cdot h^i\cdot(v_0\otimes w_0)=q^{-i}h^i \cdot(v_0\otimes w_0)$,
 it follows that $V=<h^i\cdot (v_0\otimes w_0)\, |\, i\in \mathbb{N}>.$  By induction, we obtain
$$
h^i\cdot (v_0\otimes w_0)=v_0\otimes w_{i}+(1+q^{-1}+\cdots +q^{-(i-1)})v_1\otimes  w_{i-1}
$$
for $0\leq i\leq l-1,$ and
$$
h^l\cdot (v_0\otimes w_0)=(1+q^{-1}+\cdots +q^{-(l-1)})v_1\otimes w_{l-1}\not=0
$$
since $q$ is a primitive $d$-th  root of unity and $0\leq l\leq d-1$.
It follows that $h^{l+1}\cdot (v_0\otimes w_0)=0.$  By the
construction of $M(l+1,0),$ this implies that $M(2,0)\otimes M(l,
0)$ contains an indecomposable submodule $V$  isomorphic to
$M(l+1, 0).$ On the other hand, if we set
$$e_i=h^i\cdot (v_0\otimes w_1-q^{-(l-1)}(1+q+\cdots +q^{l-2})v_1\otimes w_0) $$
for $i\in \mathbb{N},$ then  by induction again we have
$$
e_i=v_0\otimes w_{i+1}-q^{-(l-1)}(1+q+\cdots +q^{l-i-2})v_1\otimes w_i
$$
for $0\leq i\leq l-2$  and, in particular, we have
$$e_{l-2}=v_0\otimes w_{l-1}-q^{-(l-1)}v_1\otimes w_{l-2}\not=0\  \mathrm{and}\ e_{l-1}=0.$$
Notice that $g\cdot e_i=q^{-i-1}e_i=\omega^{-m}q^{-i}e_i$. Thus,
$M(2,0)\otimes M(l, 0)$ contains an indecomposable submodule $M$
 generated by $e_0$ and isomorphic to $M(l-1, -m)$. Since
$M(l+1, 0)$ has a unique composition series
$$
0\rightarrow M(1,-lm)\rightarrow \cdots \rightarrow M(l-1,
-2m)\rightarrow M(l,-m)\rightarrow M(l+1,0),
$$
we have $M(l-1,-m)\cap M(l+1, 0)=\{0\}$. Comparing the
dimensions we obtain  $M(2,0)\otimes M(l, 0)\cong M(l+1,0)\bigoplus
M(1,-m).$

(2) As in the proof of (1), we let $\{v_0, v_1\}$ and $\{w_0,w_1,\cdots, w_{d-1}\}$ be the standard basis of the indecomposable representations $M(2,0)$ and $M(d, 0)$ respectively. Consider the submodule $V$ of $M(2,0)\otimes M(d, 0)$ generated by $v_0\otimes w_0$. Since $g\cdot h^i\cdot(v_0\otimes w_0)=q^{-i}h^i \cdot(v_0\otimes w_0)$, we have \\
 $V=<h^i\cdot (v_0\otimes w_0)\, |\, i\in \mathbb{Z}>$.
Now by induction, we get
$$
h^i\cdot (v_0\otimes w_0)=v_0\otimes w_{i}+(1+q^{-1}+\cdots
+q^{-(i-1)})v_1\otimes w_{i-1}
$$
for $0\leq i\leq d-1,$ and
$$
h^d\cdot (v_0\otimes w_0)=(1+q^{-1}+\cdots +q^{-(d-1)})v_1\otimes
w_{d-1}=0.
$$
 These imply that $M(2,0)\otimes M(d, 0)$ contains an indecomposable submodule $V$ isomorphic to $M(d, 0).$

 On the other hand, if we let $N$ be  the submodule of $M(2,0)\otimes M(d, 0)$ generated by $v_0\otimes w_1$, then by induction, we obtain the following:
$$
h^i \cdot (v_0\otimes w_1)=v_0\otimes w_{i+1}+(q^{-1}+\cdots
+q^{-i})v_1\otimes w_{i}
$$
for $0\leq i\leq d-2$  and
$$h^{d-1}\cdot (v_0\otimes
w_1)=(q^{-1}+\cdots +q^{-(d-1)})v_1\otimes w_{d-1}\not=0.$$
Thus we get $h^d \cdot(v_0\otimes w_1)=0.$ Note that $g\cdot h^i\cdot
(v_0\otimes w_1)=q^{-i-1}v_0\otimes w_1=\omega^{-m}q^{-i}v_0\otimes
w_1.$  It follows that  $M(2,0)\otimes M(d, 0)$ contains an
indecomposable submodule $N$ isomorphic to $M(d, -m).$
 Comparing the dimensions we obtain that   $M(2,0)\ot M(d, 0)\cong M(d,0)\bigoplus M(d,-m).$
\end{proof}

In order to describe  the Green ring $r(H_{n,d})$ of $H_{n,d}$, we
also need the decomposition of the tensor product $M(d,0)\otimes
M(d,0).$

\begin{proposition} We have the following decomposition:
$$
M(d,0)\otimes M(d,0)\cong M(d, 0)\oplus M(d,-m)\oplus \cdots \oplus M(d, -(d-1)m).
$$
\end{proposition}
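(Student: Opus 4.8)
The plan is to compute $M(d,0)\otimes M(d,0)$ by realizing it as the value of an iterated tensor product with $M(2,0)$, using Proposition~3.3 as the engine. First I would record the special case $l=d-1$ of Proposition~3.3(1), namely $M(2,0)\otimes M(d-1,0)\cong M(d,0)\oplus M(d-2,-m)$, together with Proposition~3.3(2), $M(2,0)\otimes M(d,0)\cong M(d,0)\oplus M(d,-m)$. Since all these modules are projective (each is $d$-dimensional, hence projective indecomposable by the classification in Section~2), the subcategory of projective modules is closed under tensoring with $M(2,0)$ in a very clean way, and Proposition~3.3(2) shows that tensoring a projective with $M(2,0)$ simply produces two projectives whose "labels" differ by $-m$. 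Combined with Lemma~3.2, which lets me shift the second index freely, I get $M(2,0)\otimes M(d,j)\cong M(d,j)\oplus M(d,j-m)$ for every $j\in\mathbb{Z}_n$.

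Next I would set up an induction to express $M(d,0)^{\otimes 2}$ in terms of tensor products of the form $M(2,0)\otimes M(\ell,0)$. The key intermediate claim is that for $2\le \ell\le d$ one has
$$
M(2,0)^{\otimes(\ell-1)}\cong \bigoplus_{\text{pieces}} M(\ell',\ast),
$$
and more usefully, that $M(d,0)$ appears with a controlled multiplicity inside $M(2,0)^{\otimes(d-1)}$; alternatively, and more directly, I would prove by induction on $\ell$ that
$$
M(2,0)\otimes M(d,0)\otimes M(\ell,0)\ \cong\ M(d,0)^{\oplus ?}\oplus\cdots
$$
is not the cleanest route. The cleaner route: use associativity to write $M(d,0)\otimes\big(M(2,0)\otimes M(d-1,0)\big)\cong M(d,0)\otimes M(d,0)\ \oplus\ M(d,0)\otimes M(d-2,-m)$, and simultaneously $\big(M(d,0)\otimes M(2,0)\big)\otimes M(d-1,0)\cong \big(M(d,0)\oplus M(d,-m)\big)\otimes M(d-1,0)$. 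Iterating the identity $M(d,0)\otimes M(2,0)\cong M(d,0)\oplus M(d,-m)$ (via Lemma~3.2 at each shifted index) gives a recursion $M(d,0)\otimes M(\ell+1,0)\ \oplus\ M(d,0)\otimes M(\ell-1,-m)\ \cong\ M(d,0)\otimes M(\ell,0)\ \oplus\ (\text{shift of it by }-m)$, which I would solve: writing $T_\ell:=M(d,0)\otimes M(\ell,0)$ for $1\le\ell\le d$, one has $T_1=M(d,0)$, $T_2=M(d,0)\oplus M(d,-m)$, and $T_{\ell+1}\oplus T_{\ell-1}^{(-m)}\cong T_\ell\oplus T_\ell^{(-m)}$ where $(-m)$ denotes the index shift by $-m$; an easy induction then yields $T_d\cong M(d,0)\oplus M(d,-m)\oplus\cdots\oplus M(d,-(d-1)m)$ since at each stage the "new" summand shifts down by one more copy of $m$.

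To make the bookkeeping rigorous I would actually just prove directly, by induction on $\ell$, that $M(d,0)\otimes M(\ell,0)\cong M(d,0)\oplus M(d,-m)\oplus\cdots\oplus M(d,-(\ell-1)m)$ for $1\le\ell\le d$: the base case $\ell=1$ is trivial, the case $\ell=2$ is Proposition~3.3(2) (after using Lemma~3.2), and the inductive step combines $M(2,0)\otimes M(\ell,0)\cong M(\ell+1,0)\oplus M(\ell-1,-m)$ from Proposition~3.3(1) with associativity: $M(d,0)\otimes M(\ell+1,0)\ \oplus\ M(d,0)\otimes M(\ell-1,-m)\ \cong\ M(d,0)\otimes\big(M(2,0)\otimes M(\ell,0)\big)\cong M(2,0)\otimes\big(M(d,0)\otimes M(\ell,0)\big)$, and the right side is computed by the induction hypothesis together with the already-established formula $M(2,0)\otimes M(d,j)\cong M(d,j)\oplus M(d,j-m)$. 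Solving for $M(d,0)\otimes M(\ell+1,0)$ and cancelling the known term $M(d,0)\otimes M(\ell-1,-m)$ (legitimate by the Krull--Schmidt theorem, since all modules here are finite dimensional) gives exactly the claimed formula with $\ell$ replaced by $\ell+1$. Taking $\ell=d$ finishes the proof. The main obstacle is purely organizational: keeping the $\mathbb{Z}_n$-indices straight through the shifts by $-m$ and making sure every cancellation is justified by Krull--Schmidt rather than by a dimension count alone; once the indexing is pinned down the argument is a routine induction.
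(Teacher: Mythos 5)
Your proof is correct, but it takes a genuinely different route from the paper. The paper argues directly inside $M(d,0)\otimes M(d,0)$: for each $0\le i\le d-1$ it shows that the submodule generated by $v_0\otimes v_i$ is isomorphic to $M(d,-im)$ (by the same explicit $h$-action computation as in Proposition 2.3), and then concludes by a dimension count. You instead bootstrap from Proposition 2.3 alone, proving by induction on $\ell$ that $M(d,0)\otimes M(\ell,0)\cong\bigoplus_{s=0}^{\ell-1}M(d,-sm)$ via associativity, the index-shift Lemma 2.2, and Krull--Schmidt cancellation; this avoids any further basis computation and is a nice illustration of how far the two decompositions of Proposition 2.3 already determine the ring. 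The one point you should make explicit is the step $M(d,0)\otimes M(2,0)\cong M(2,0)\otimes M(d,0)$ (and more generally commuting $M(2,0)$ past $M(d,0)\otimes M(\ell,0)$): Proposition 2.3 only computes tensor products with $M(2,0)$ on the \emph{left}, so you need commutativity of the tensor product up to isomorphism. This does follow without circularity --- Lemma 2.2 shows $[M(1,-1)]$ commutes with everything, and parts (1) and (2) of Lemma 3.1 (which rest only on Lemma 2.2 and Proposition 2.3, not on the present proposition) express every $[M(l,r)]$ as a polynomial in the commuting elements $[M(1,-1)]$ and $[M(2,0)]$, so $r(H_{n,d})$ is commutative and Krull--Schmidt upgrades equality of classes to isomorphism of modules --- but as written your argument leaves this implicit. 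With that justification supplied, and noting that the case $d=2$ is already covered by the base case $\ell=2$, your induction closes correctly. The paper's proof is shorter and self-contained; yours is more formal but generalizes better to situations where one only knows how a small module tensors with everything.
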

\begin{proof}
Let $\{v_0,v_1,\cdots, v_{d-1}\}$ be the standard basis of the
indecomposable representation $M(d,0).$  Similar to the proof of the
Proposition 2.3, for $0\leq i\leq d-1$ we get that
$\langle v_0\otimes v_i\rangle,$ the submodule  of  $M(d,0)\otimes M(d,0)$
generated by element $v_0\otimes v_i$,  is isomorphic to $M(d, -im).$
Now comparing the dimensions, we have an isomorphism:
$$ M(d,0)\otimes M(d,0)\cong M(d, 0)\oplus M(d,-m)\oplus \cdots
\oplus M(d, -(d-1)m).
$$
\end{proof}

\section{\bf Generators and relations for the Green ring $r(H_{n,d})$}

The aim of this section is to describe explicitly the generators and
the generating relations for the Green  ring $r(H_{n,d})$ of the
generalized Taft algebra $H_{n,d}$.

Let $H$ be a Hopf algebra over a field $K$. Recall that the Green ring or the
representation ring  $r(H)$ of $H$ can be defined as
follows. As a group $r(H)$ is the free abelian group generated by
the isomorphism classes $[V]$ of finite dimensional $H$-modules $V$
modulo the relations $[M\oplus V]=[M]+[V]$. The multiplication of
$r(H)$ is given by the tensor product of $H$-modules, that is,
$[M][V]=[M\ot V]$. Then $r(H)$ is an associative ring with identity given by
$[K_\varepsilon]$, where $K_\varepsilon$ is the trivial
1-dimensional $H$-module. Note that $r(H)$ is a free abelian group
with a $\mathbb Z$-basis $\{[V]|V\in{\rm ind}(H)\}$, where ${\rm
ind}(H)$ denotes the set of finite dimensional indecomposable
$H$-modules. The complexified Green ring $R(H)$, which we shall call
the Green algebra for short, is an associative
$\mathbb{C}$-algebra defined by $\mathbb{C}\ot_{\mathbb Z}r(H)$.

Now we return to the case  $H=H_{n,d}$.  We begin with the
following lemma, which follows from Lemma 2.2, Proposition 2.3 and 2.4.

\begin{lemma} The following equations hold in $r(H_{n,d})$.
\begin{enumerate}
\item $[M(1,-1)]^n=1$ and $[M(l, r)]=[M(1,-1)]^{n-r}[M(l,0)]$ for all $1\leqslant l\leqslant d$ and $r\in{\mathbb Z}_n$.
\item $[M(l+1, 0)]=[M(2,0)][M(l,0)]-[M(1,-1)]^m[M(l-1,0)]$ for all $2\leqslant l\leqslant d-1$ and $d>2$.
\item $[M(2,0)][M(d,0)]=(1+[M(1,-1)]^m)[M(d,0)]$.
\item  $[M(d,0)][M(d, 0)]=\sum\limits_{i=0}^{d-1}[M(d, -im)].$ \hfill$\Box$
\end{enumerate}
\end{lemma}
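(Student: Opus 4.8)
The plan is to derive each of the four equations directly from the module-theoretic results already established in Section~2, translating isomorphisms of $H_{n,d}$-modules into identities in the Green ring via the defining rule $[M\otimes V]=[M][V]$ and $[M\oplus V]=[M]+[V]$.

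First I would handle item (1). From Lemma~2.2 we have $M(1,i)\otimes M(1,r)\cong M(1,r+i)$, so the classes $[M(1,r)]$ for $r\in\mathbb{Z}_n$ form a cyclic group under multiplication; in particular $[M(1,-1)]^k=[M(1,-k)]$ for all $k$, and since $M(1,-n)=M(1,0)=K_\varepsilon$ is the unit, $[M(1,-1)]^n=1$. For the second half of (1), Lemma~2.2 also gives $M(l,r)\cong M(1,r)\otimes M(l,0)$, hence $[M(l,r)]=[M(1,r)][M(l,0)]$; writing $[M(1,r)]=[M(1,-(n-r))]=[M(1,-1)]^{n-r}$ yields the claim.

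Next, items (2), (3) and (4) are immediate transcriptions of Propositions~2.3(1), 2.3(2) and 2.4 respectively. For (2): Proposition~2.3(1) says $M(2,0)\otimes M(l,0)\cong M(l+1,0)\oplus M(l-1,-m)$ for $2\le l\le d-1$, $d>2$; applying $[-]$ gives $[M(2,0)][M(l,0)]=[M(l+1,0)]+[M(l-1,-m)]$, and by part (1) we have $[M(l-1,-m)]=[M(1,-1)]^m[M(l-1,0)]$, so rearranging gives the stated relation. For (3): Proposition~2.3(2) gives $M(2,0)\otimes M(d,0)\cong M(d,0)\oplus M(d,-m)$, so $[M(2,0)][M(d,0)]=[M(d,0)]+[M(d,-m)]=(1+[M(1,-1)]^m)[M(d,0)]$, again using (1). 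For (4): Proposition~2.4 gives the decomposition $M(d,0)\otimes M(d,0)\cong\bigoplus_{i=0}^{d-1}M(d,-im)$, which immediately yields $[M(d,0)]^2=\sum_{i=0}^{d-1}[M(d,-im)]$.

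There is no real obstacle here; the lemma is purely bookkeeping, and the only point requiring a moment's care is the exponent convention in (1) — that $[M(1,r)]$ equals $[M(1,-1)]^{n-r}$ rather than $[M(1,-1)]^{-r}$ — which is just the observation that $-r\equiv n-r\pmod n$ together with $[M(1,-1)]^n=1$, so the two expressions coincide. I would present the argument as four short displayed computations, one per item, each citing the relevant result from Section~2.
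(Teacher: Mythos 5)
Your proof is correct and follows exactly the route the paper intends: Lemma 3.1 is stated as an immediate consequence of Lemma 2.2 and Propositions 2.3 and 2.4, and your four transcriptions (including the exponent bookkeeping $[M(1,r)]=[M(1,-1)]^{n-r}$ via $[M(1,-1)]^n=1$) are precisely that argument. Nothing is missing.
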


As a consequence, we have the following corollary.

\begin{corollary}
The Green ring $r(H_{n,d})$  is a commutative ring generated by $[M(1,-1)]$ and $[M(2,0)].$ \hfill$\Box$
\end{corollary}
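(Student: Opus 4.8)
The plan is to derive Corollary 3.2 directly from Lemma 3.1 together with the description of the indecomposable modules and the basic tensor identities in Lemma 2.2. First I would recall that $r(H_{n,d})$ is a free abelian group on the classes $[M(l,i)]$ with $1\leqslant l\leqslant d$ and $i\in\mathbb Z_n$, so it suffices to show that every such generator lies in the subring $S$ generated by $x:=[M(1,-1)]$ and $y:=[M(2,0)]$. By part (1) of Lemma 3.1 we have $[M(l,r)]=x^{\,n-r}[M(l,0)]$, so the problem reduces to showing $[M(l,0)]\in S$ for each $1\leqslant l\leqslant d$. This is where the recursion comes in.

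Next I would argue by induction on $l$. For $l=1$, $[M(1,0)]=[K_\varepsilon]=1\in S$; for $l=2$, $[M(2,0)]=y\in S$ by definition. For the inductive step, assume $[M(l-1,0)],[M(l,0)]\in S$ with $2\leqslant l\leqslant d-1$; then part (2) of Lemma 3.1 gives
\[
[M(l+1,0)]=y\,[M(l,0)]-x^{m}[M(l-1,0)],
\]
which visibly lies in $S$ since $x,y\in S$ and $S$ is closed under multiplication and subtraction. (When $d=2$ the range $2\leqslant l\leqslant d-1$ is empty and there is nothing to prove beyond $l=1,2$.) This covers all $[M(l,0)]$ with $1\leqslant l\leqslant d$, and combining with part (1) of Lemma 3.1 shows every $[M(l,i)]$ lies in $S$; hence $S=r(H_{n,d})$, i.e. $r(H_{n,d})$ is generated as a ring by $x$ and $y$. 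Parts (3) and (4) of Lemma 3.1 are not needed for the generation statement itself — they will be used later to extract the two defining relations — but it is worth noting here that they are automatically consistent with $S=r(H_{n,d})$.

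Finally, for commutativity I would invoke the general fact that $H_{n,d}$ is a Hopf algebra, so for any two modules $M,V$ there is a natural isomorphism $M\otimes V\cong V\otimes M$ obtained by composing the flip with the action of a suitable element; more concretely, one can cite the symmetry already recorded in Lemma 2.2 for the relevant generators, together with the observation that a ring generated by two mutually commuting elements is commutative. Since $x=[M(1,-1)]$ and $y=[M(2,0)]$ commute by Lemma 2.2 (the first isomorphism there, with $l=2$, $r=0$, $i=-1$), and these two elements generate $r(H_{n,d})$, the ring is commutative. The only mild subtlety is making sure the inductive step is stated for the correct range of $l$ and that the base cases $l=1,2$ and the degenerate case $d=2$ are handled separately; there is no real obstacle here, as the whole argument is a routine unwinding of Lemma 3.1.
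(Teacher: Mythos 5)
Your proof is correct and is exactly the argument the paper intends (the paper states the corollary as an immediate consequence of Lemma 3.1 with no written proof): reduce to $[M(l,0)]$ via Lemma 3.1(1), induct on $l$ via the recursion in Lemma 3.1(2), and get commutativity because the two generators commute by Lemma 2.2. One caveat: the ``general fact'' you invoke first --- that for any Hopf algebra $M\otimes V\cong V\otimes M$ via the flip composed with the action of a suitable element --- is \emph{not} a general fact; it requires quasitriangularity, which the paper explicitly says $H_{n,d}$ lacks in general, and indeed the paper points out that the Green ring of the small quantum group is non-commutative. Fortunately your fallback argument (the explicit isomorphism $M(1,i)\otimes M(l,r)\cong M(l,r)\otimes M(1,i)$ from Lemma 2.2, plus the observation that a ring generated by two commuting elements is commutative) is the one that actually carries the proof, so the argument stands.
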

Therefore, $r(H_{n,d})$ can be identified with a quotient of the
polynomial ring $\mathbb{Z}[y,z].$ In the sequel, we shall determine the relations of the two generators. Let $F_s(y,z)$ be the generalized
Fibonacci polynomials defined by
$$
F_{s+2}(y,z)=zF_{s+1}(y,z)-yF_{s}(y,z)
$$
for $s\geq 1$, while $F_0(y,z)=0, \, F_1(y,z)=1,\, F_2(y,z)=z.$
These generalized Fibonacci polynomials were found appearing in the
generating relations of the Green rings of the Taft algebras $H_n$
(see \cite{Coz}). The general form of the polynomials is as follows:

\begin{lemma} \cite[Lemma 3.11]{Coz}
For $s\geq 2$ we have
\begin{equation*}  F_s(y,z)=\sum\limits_{i=0}^{[(s-1)/2]}(-1)^i\left[
                                              \begin{array}{c}
                                                s-1-i\\
                                                i\\
                                              \end{array}
                                            \right]y^{i}z^{s-1-2i},
\end{equation*}
where $[\frac{s-1}{2}]$ denotes the biggest integer which is not bigger than $\frac{s-1}{2}$. \hfill$\Box$
\end{lemma}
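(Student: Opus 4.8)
The identity concerns only the sequence of polynomials $F_s(y,z)$ determined by $F_{s+2}=zF_{s+1}-yF_s$ together with $F_0=0$, $F_1=1$, $F_2=z$, so the natural approach is induction on $s$. A preliminary observation is that the recurrence in fact holds for all $s\geq 0$, since $s=0$ reads $F_2=zF_1-yF_0=z$, which is consistent with the given initial data. Set $G_s(y,z)=\sum_{i=0}^{\lfloor(s-1)/2\rfloor}(-1)^i\binom{s-1-i}{i}y^i z^{s-1-2i}$ for $s\geq1$ and $G_0=0$. The base cases are immediate: $G_1=z^0=1$ and $G_2=z$. Assuming $G_s=F_s$ and $G_{s+1}=F_{s+1}$, it then suffices to verify the single identity $G_{s+2}=zG_{s+1}-yG_s$, after which the claim follows because $G_2,G_3,\dots$ satisfy the same recurrence and initial values as $F_2,F_3,\dots$.

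For the inductive step I would compute $zG_{s+1}-yG_s$ directly. Multiplying $G_{s+1}$ by $z$ raises each $z$-exponent by one, producing $\sum_{i\geq0}(-1)^i\binom{s-i}{i}y^i z^{s+1-2i}$; multiplying $G_s$ by $y$ and re-indexing by $j=i+1$ produces $\sum_{j\geq1}(-1)^{j-1}\binom{s-j}{j-1}y^j z^{s+1-2j}$. Subtracting, both sums run over the monomials $y^i z^{s+1-2i}$: the $i=0$ contribution is $z^{s+1}$, and for $i\geq1$ the coefficient is $(-1)^i\bigl(\binom{s-i}{i}+\binom{s-i}{i-1}\bigr)$, which collapses to $(-1)^i\binom{s+1-i}{i}$ by Pascal's identity. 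This is exactly the coefficient of $y^i z^{s+1-2i}$ in the proposed expansion of $G_{s+2}=\sum_{i}(-1)^i\binom{s+1-i}{i}y^i z^{s+1-2i}$, so the two polynomials agree.

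The one point requiring a little care is the matching of summation ranges at the two ends, i.e. showing that truncating the various sums at $\lfloor(s-1)/2\rfloor$, $\lfloor s/2\rfloor$, or $\lfloor(s+1)/2\rfloor$ respectively introduces no discrepancy; this is handled uniformly by the convention $\binom{a}{b}=0$ whenever $b>a\geq0$, so that every ``excess'' term vanishes automatically and no case split on the parity of $s$ is needed. I would spell this out once. As a sanity check (and an alternative route one could write instead), the generating function identity $\sum_{s\geq0}F_s(y,z)\,t^s=\dfrac{t}{1-zt+yt^2}$ follows at once from the recurrence, and expanding $\dfrac{t}{1-(zt-yt^2)}=\sum_{k\geq0}t\,(zt-yt^2)^k$ by the binomial theorem and extracting the coefficient of $t^s$ reproduces the stated formula with the same index bookkeeping; since the induction argument needs nothing beyond Pascal's rule, that is the version I would present in full.
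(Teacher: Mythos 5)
Your argument is correct and complete: the induction via Pascal's identity $\binom{s-i}{i}+\binom{s-i}{i-1}=\binom{s+1-i}{i}$, together with the convention that $\binom{a}{b}=0$ for $b>a\geq 0$ to reconcile the summation bounds, establishes the formula, and your generating-function cross-check is also valid. The paper itself offers no proof here (it simply cites \cite[Lemma 3.11]{Coz}), and your induction is essentially the standard argument given in that reference, so there is nothing further to compare.
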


Now we are ready to present the Green ring $r(H_{n,d}).$

\begin{theorem}
The Green ring $r(H_{n,d})$ of $H_{n,d}$ is isomorphic to the
quotient ring of the polynomial ring $\mathbb{Z}[y,z]$ modulo the
ideal $I$ generated by the following elements
 $$
 y^n-1,\,\, (z-y^m-1)F_d(y^m,z).
 $$
 Moreover, under the isomorphism the polynomial $F_s(y^m,z)$ corresponds to $[M(s,0)]$ for $2\leq s\leq d.$
\end{theorem}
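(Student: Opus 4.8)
The plan is to build a ring homomorphism $\Phi\colon \mathbb{Z}[y,z]\to r(H_{n,d})$ by $y\mapsto [M(1,-1)]$ and $z\mapsto [M(2,0)]$, to show it is surjective, to show the ideal $I=(y^n-1,\;(z-y^m-1)F_d(y^m,z))$ lies in $\ker\Phi$, and finally to show $\ker\Phi\subseteq I$ by a rank/basis count. The surjectivity is already \coref{...} (Corollary~3.2): $r(H_{n,d})$ is generated by these two elements. The first containment $y^n-1\in\ker\Phi$ is immediate from part (1) of \leref{...} (Lemma~3.1), since $[M(1,-1)]^n=1$.

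The second generator requires the auxiliary identification $\Phi(F_s(y^m,z))=[M(s,0)]$ for $1\le s\le d$, which I would prove by induction on $s$. For $s=1$, $F_1=1\mapsto [K_\varepsilon]=[M(1,0)]$, and for $s=2$, $F_2=z\mapsto [M(2,0)]$. The inductive step is exactly the recursion defining the generalized Fibonacci polynomials matched against part (2) of Lemma~3.1: since $F_{s+1}(y^m,z)=zF_s(y^m,z)-y^m F_{s-1}(y^m,z)$, applying $\Phi$ and the induction hypothesis gives $[M(2,0)][M(s,0)]-[M(1,-1)]^m[M(s-1,0)]$, which is $[M(s+1,0)]$ by Lemma~3.1(2). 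Having established this, part (3) of Lemma~3.1 reads $[M(2,0)][M(d,0)]=(1+[M(1,-1)]^m)[M(d,0)]$; translating via $\Phi$ this says precisely $\Phi\big(zF_d(y^m,z)-(1+y^m)F_d(y^m,z)\big)=0$, i.e. $(z-y^m-1)F_d(y^m,z)\in\ker\Phi$. So $I\subseteq\ker\Phi$ and $\Phi$ descends to a surjection $\bar\Phi\colon \mathbb{Z}[y,z]/I\twoheadrightarrow r(H_{n,d})$.

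For the reverse inclusion I would argue that $\bar\Phi$ is an isomorphism by exhibiting a spanning set of $\mathbb{Z}[y,z]/I$ of the right size. Modulo $y^n-1$ every class is represented by a $\mathbb{Z}[y]/(y^n-1)$-combination of powers of $z$; using $z^s = F_{s+1}(y^m,z) + (\text{lower-degree-in-}z\text{ terms with coefficients in }\mathbb{Z}[y^m])$ — a triangularity visible from $F_{s+1}(y^m,z)=z^s+\cdots$ in Lemma~3.3 — one rewrites everything in the basis $\{y^r F_s(y^m,z): 0\le r\le n-1,\ 1\le s\le d\}$, and then the relation $(z-y^m-1)F_d(y^m,z)=0$, which gives $zF_d = (y^m+1)F_d$, lets one eliminate $F_s$ for $s>d$ (equivalently, collapse $z\cdot F_d$ back down), so $\mathbb{Z}[y,z]/I$ is spanned by these $nd$ elements. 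Under $\bar\Phi$ they map to $\{[M(1,-1)]^r[M(s,0)]\}=\{[M(s,r)]\}$ (using Lemma~3.1(1)), which is exactly the $\mathbb Z$-basis of $r(H_{n,d})$ of cardinality $nd$. A surjection of free $\mathbb Z$-modules that sends a spanning set of $nd$ elements onto a basis of $nd$ elements must be an isomorphism, which forces the spanning set downstairs to be a basis and $\bar\Phi$ to be bijective.

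The main obstacle is the bookkeeping in the last paragraph: one must be careful that the rewriting using the Fibonacci recursion and the relation $zF_d=(1+y^m)F_d$ genuinely reduces an \emph{arbitrary} element of $\mathbb{Z}[y,z]/I$ to a $\mathbb{Z}$-span (not merely a $\mathbb{Z}[y^m]$-span that could secretly be larger) of the $nd$ candidate elements — in particular checking that multiplication by $z$ preserves this span, which is the only generator one needs to control. Once that closure under $z$-multiplication is verified, the cardinality match with the known basis of $r(H_{n,d})$ closes the argument; the final clause of the theorem, $F_s(y^m,z)\leftrightarrow[M(s,0)]$, has then already been proved along the way.
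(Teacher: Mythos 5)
Your proposal is correct and follows essentially the same route as the paper: map $y\mapsto[M(1,-1)]$, $z\mapsto[M(2,0)]$, verify the two relations via Lemma~3.1 (identifying $F_s(y^m,z)$ with $[M(s,0)]$ inductively), and conclude by matching a spanning set of $nd$ elements of $\mathbb{Z}[y,z]/I$ against the $\mathbb{Z}$-basis of $r(H_{n,d})$ of the same cardinality. The only cosmetic difference is that you use the spanning set $\{y^rF_s(y^m,z)\}$ where the paper uses the monomials $\{y^iz^j\}$ (both work because $(z-y^m-1)F_d(y^m,z)$ is monic of degree $d$ in $z$), and your "surjection carries a spanning set onto a basis" argument is a clean, fully justified version of the paper's rank count.
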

\begin{proof}
It follows  from Lemma 3.1 that
$$
([M(2,0)]-(1+[M(1,-1)]^m))[M(d,0)]=0,\,   \, [M(1,-1)]^n=1
$$
and
$$
[M(d,0)]=F_d([M(1,-1)]^m, [M(2,0)]).
$$
These lead to a natural surjective $\mathbb{Z}$-algebra homomorphism
$$\Phi: \mathbb{Z}[y,z]/I\rightarrow r(H_{n,d}),\,
\Phi(y)=[M(1,-1)],\, \Phi(z)=[M(2,0)]
$$
from $\mathbb{Z}[y,z]/I$ to $r(H_{n,d})$.
Observe that as a free $\mathbb{Z}$-module, $\mathbb{Z}[y,z]/I$ has a
$\mathbb{Z}$-basis $\{y^iz^j\, |\, 0\leq i\leq n-1, 0\leq j\leq
d\}.$  This means that $\mathbb{Z}[y,z]/I$ and  $r(H_{n,d})$  both have
the same rank $nd$ as free $\mathbb{Z}$-modules. It follows that
$\Phi$ must be an isomorphism. The other assertions now follow from the
fact that  the following relations hold in $r(H_{n,d})$
$$
[M(l+1, 0)]=[M(2,0)][M(l,0)]-[M(1,-1)]^m[M(l-1,0)]
$$
for all $2\leqslant l\leqslant d-1$ and $d>2$.
\end{proof}

 One can easily see that the Grothendieck ring of $H_{n,d}$ is the group ring $\mathbb{Z}\mathbb{Z}_n$ generated by $[M(1,-1)]$.
 From the above theorem, we see that the Green ring of $H_{n,d}$ is much more complicated than the Grothendieck ring of $H_{n,d}$. Another interesting factor is that the Green ring of $H_{n,d}$ is commutative although $H_{n,d}$ is not quasitriangular. Thus the Green rings of the generalized Taft algebras
$H_{n,d}$ (here $n,d$ can be chosen) provide more examples that  a non-quasitriangular Hopf algebra possesses a commutative Green ring. However, we do have non-commutative Green rings. For example, the Green ring of the small quantum group is not commutative when $p\geq 3$ (see \cite{ks}).

Now let $A$ be the subalgebra of $r(H_{n,d})$ generated by $[M(1,-m)]$
and $[M(2,0)]$. Then by Lemma 3.1 we have
$$
r(H_{n,d})=A\bigoplus A[M(1,-1)]\bigoplus\cdots \bigoplus
A[M(1,-1)]^{(m-1)}. $$ By Theorem 3.10 in \cite{Coz} and Theorem
3.4, it is easy to see that $A$ is isomorphic to $r(H_d),$ the Green
ring of the Taft algebra $H_d.$ Now the following theorem is straightforward.

\begin{theorem}
As a $\mathbb{Z}$-algebra, the  Green ring $r(H_{n,d})$ is
isomorphic to $r(H_d)[x]/(x^m-a),$ where $a=[M(1,-m)].$ \hfill$\Box$
\end{theorem}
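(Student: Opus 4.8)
The plan is to identify $r(H_{n,d})$ with $r(H_d)[x]/(x^m-a)$ by first isolating the subalgebra $A$ of $r(H_{n,d})$ generated by $[M(1,-m)]$ and $[M(2,0)]$ and showing $A\cong r(H_d)$, and then showing that $[M(1,-1)]$ satisfies the monic relation $x^m-a$ over $A$ together with a dimension count. Concretely, I would proceed as follows.

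First I would establish the direct sum decomposition
$$
r(H_{n,d})=A\oplus A[M(1,-1)]\oplus\cdots\oplus A[M(1,-1)]^{m-1}.
$$
By Corollary 3.2, $r(H_{n,d})$ is generated over $\mathbb Z$ by $[M(1,-1)]$ and $[M(2,0)]$, and Lemma 3.1(1) gives $[M(l,r)]=[M(1,-1)]^{n-r}[M(l,0)]$ together with $[M(1,-1)]^n=1$; since $[M(1,-m)]=[M(1,-1)]^m$, the element $[M(1,-1)]$ is integral of degree $m$ over $A$, so the above sum spans $r(H_{n,d})$ over $A$. For directness, I would use the explicit $\mathbb Z$-basis of $r(H_{n,d})$: by Theorem 3.4, $r(H_{n,d})\cong\mathbb Z[y,z]/I$ has $\mathbb Z$-basis $\{y^iz^j:0\le i\le n-1,\ 0\le j\le d\}$ — wait, one must be slightly careful since $z^d$ is not independent of lower powers; rather the cleanest route is to note that $A$ has $\mathbb Z$-rank $d^2$ (being isomorphic to $r(H_d)$, by Theorem 3.10 of \cite{Coz} combined with Theorem 3.4 above, which exhibits $A$ as $\mathbb Z[y^m,z]$ modulo the corresponding ideal), and that $r(H_{n,d})$ has $\mathbb Z$-rank $nd=md$. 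Then a sum of $m$ copies of a rank-$d^2$ module can have rank at most $md^2$, but we need it to equal $md$; so instead the count should be: $A$ has rank $d\cdot m'$ where — let me instead simply invoke that the stated decomposition is asserted in the paragraph preceding the theorem and is justified there by Lemma 3.1, and take the rank of $A$ to be $nd/m = d$ times... The honest approach: $r(H_{n,d})$ is free of rank $nd$ over $\mathbb Z$; $A$ is free; the quotient $r(H_{n,d})/A$-module structure forces $\mathrm{rk}_{\mathbb Z}A=nd/m=d^{2}$ only if $m\mid$ everything compatibly, which it does since $n=md$ and $r(H_d)$ has rank $d^2$.

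Next I would set up the map. Define a $\mathbb Z$-algebra homomorphism
$$
\Psi: r(H_d)[x]\longrightarrow r(H_{n,d})
$$
by sending $r(H_d)\to A\subseteq r(H_{n,d})$ via the isomorphism of Theorem 3.10 of \cite{Coz} and Theorem 3.4 (matching the generators $[M(1,-1)_{H_d}]\mapsto[M(1,-m)]$ and $[M(2,0)_{H_d}]\mapsto[M(2,0)]$), and sending $x\mapsto[M(1,-1)]$. Since $[M(1,-1)]^m=[M(1,-m)]=a$ by Lemma 3.1(1), $\Psi$ kills $x^m-a$ and descends to $\bar\Psi: r(H_d)[x]/(x^m-a)\to r(H_{n,d})$. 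Surjectivity is immediate from Corollary 3.2. For injectivity, I would use that $r(H_d)[x]/(x^m-a)$ is free over $r(H_d)$ with basis $1,x,\dots,x^{m-1}$, hence free of $\mathbb Z$-rank $m\cdot\mathrm{rk}_{\mathbb Z}r(H_d)=m d^2$... and here the rank count must reconcile with $nd=md$, which means $\mathrm{rk}_{\mathbb Z}r(H_d)$ should pair against... so in fact the cleanest finish is: $\bar\Psi$ is a surjection of free $\mathbb Z$-modules; combining the decomposition $r(H_{n,d})=\bigoplus_{i=0}^{m-1}A[M(1,-1)]^i$ with $A=\Psi(r(H_d))$ shows $\bar\Psi$ maps the basis $\{[M(1,-m)]^a[M(2,0)]^b x^i\}$ onto a spanning set that is in bijection with the decomposition summands, and the ranks agree, so $\bar\Psi$ is an isomorphism.

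The main obstacle I anticipate is making the rank/freeness bookkeeping airtight — specifically verifying that the decomposition $r(H_{n,d})=A\oplus A[M(1,-1)]\oplus\cdots\oplus A[M(1,-1)]^{m-1}$ is genuinely a \emph{direct} sum and not merely a sum, i.e. that no nontrivial $A$-relation among $1,[M(1,-1)],\dots,[M(1,-1)]^{m-1}$ holds. This is where one really needs the explicit basis from Theorem 3.4: writing everything in terms of $y^iz^j$ with $0\le i\le n-1$, the elements $[M(1,-1)]^i = y^i$ for $0\le i\le m-1$ are manifestly $A$-independent because multiplying an $A$-combination (which only involves $y^{mk}$ and polynomials in $z$) by $y^i$ lands in distinct residue classes of the $y$-exponent mod $m$. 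Once that independence is in hand, the isomorphism follows formally, and the identification of $A$ with $r(H_d)$ is exactly Theorem 3.10 of \cite{Coz} read through Theorem 3.4.
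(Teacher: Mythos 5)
Your proposal follows the paper's own route exactly: the paper's entire proof is the paragraph preceding the theorem, which introduces the subalgebra $A$ generated by $[M(1,-m)]$ and $[M(2,0)]$, asserts the decomposition $r(H_{n,d})=A\oplus A[M(1,-1)]\oplus\cdots\oplus A[M(1,-1)]^{m-1}$ via Lemma 3.1, and identifies $A\cong r(H_d)$ by Theorem 3.10 of \cite{Coz} together with Theorem 3.4 --- and your closing observation that $A$-combinations involve only $y$-exponents divisible by $m$ (the relations $y^n-1=(y^m)^d-1$ and $(z-y^m-1)F_d(y^m,z)$ are homogeneous for the $\mathbb{Z}_m$-grading by $y$-degree) is precisely the right way to make the directness of that sum rigorous. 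The only thing to repair is the arithmetic that derails you mid-proof: since $n=md$ one has $nd=md^2$ (not $md$), so $\mathrm{rk}_{\mathbb Z}\,A=nd/m=d^2=\mathrm{rk}_{\mathbb Z}\,r(H_d)$ and $\mathrm{rk}_{\mathbb Z}\bigl(r(H_d)[x]/(x^m-a)\bigr)=m\cdot d^2=nd$ agree with no tension whatsoever.
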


\begin{remark}
For a finite-dimensional Hopf algebra $H$ over a field $K$, the antipode
$S$ induces an anti-ring endomorphism $\ast$ of the Green ring
$r(H)$, see \cite{Lo}. In the case where $H$ is the generalized Taft algebra
$H_{n,d},$  it is easy to see that the anti-ring endomorphism $\ast$
is given by $[M(1,-1)]^\ast=[M(1, -1)]^{-1}$ and
$[M(2,0)]^\ast=[M(1, -1)]^{-m}[M(2, 0)].$ It follows that the
the anti-isomorphism $\ast$ is an involution whereas $S$ is not.
This is because $S^2$ is inner and $V^{**}\cong V$ for any finite dimensional $H$-module, see \cite{Lo}.
\end{remark}

\section{\bf Roots of the generalized Fibonacci polynomials}\selabel{3}

In order to determine all nilpotent elements and the structure of
the Green rings $r(H_{n,d})$ and $R(H_{n,d})$, we need to compute
explicitly all roots of the generalized Fibonacci polynomial $F_d(y,z)$
associated to the generating relations of the Green ring
$r(H_{n,d}).$  We shall set up in this section a general framework, which is of interest in its own.

Let $\mathbb{C}[x]$ be the polynomial algebra over the complex number field $\mathbb{C}$ with variable $x$.
Fix $0\not=a\in \mathbb{C}$, the generalized Fibonacci polynomials  $F_s(a, x)$ are defined recursively:
$$
F_0(a,x)=0,\,  \,  F_1(a, x)=1,\, \,
F_{s+2}(a,x)=xF_{s+1}(a,x)-aF_s(a,x)
$$
for $s\geq 0$.  $F_s(a, x)$ has the following combinatorial formula as
shown in Section 3:
$$
F_{s}(a,x)=\sum\limits_{i=0}^{[(s-1)/2]}(-1)^i\left[
                                              \begin{array}{c}
                                                s-1-i\\
                                                i\\
                                              \end{array}
                                            \right]a^ix^{s-1-2i}
 $$
 for $s\geq 1.$ From these combinatorial expressions, it is easy to see that $F_{s}(a,x)$ is a polynomial in $\mathbb{C}[x]$ with degree $s-1$.

\begin{remark} The classical Fibonacci polynomials $F_s(x)$ are defined by the linear recurrence
$$
F_{s+2}(x)=xF_{s+1}(x)+F_{s}(x)
$$
for $s\geq 0$ and $F_0(x)=0, F_1(x)=1,$  that is,
$F_s(x)=F_s(-1,x).$ These polynomials are of great importance  in
the study of many subjects such as algebra, geometry, and number
theory. Obviously,  these polynomials have a close
relation with the famous Fibonacci numbers. The Fibonacci
polynomials are essentially Chebychev polynomials, of which the roots can be computed. Thus $F_s(x)$  has $s-1$ distinct roots given by $($see, e.g., \cite{HB}$):$
$$
x_j=2i\, cos \frac{\pi j}{s}, \,\,   1\leq j\leq s-1.\ \ \ \ \ \ \ \ \ \  \hfill \Box
$$
\end{remark}

The following proposition give us the roots of generalized Fibonacci polynomial $F_s(a,x)$.
\begin{proposition}
Let $s\geq 2.$  Then the generalized Fibonacci polynomial $F_s(a,x)$ has $s-1$ distinct complex  roots given by
$$
x_j=2\sqrt{a}\; cos \frac{j\pi}{s},  \, \, \, \, \, 1\leq j\leq s-1.
$$
\end{proposition}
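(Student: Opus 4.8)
The plan is to reduce the problem to the classical Fibonacci (equivalently, Chebyshev) case by a simple substitution that absorbs the parameter $a$. First I would observe that if we set $x = 2\sqrt{a}\,t$ for a fresh variable $t$ (choosing a fixed square root $\sqrt{a}$, which exists since $\mathbb{C}$ is algebraically closed), then the recursion $F_{s+2}(a,x) = xF_{s+1}(a,x) - aF_s(a,x)$ becomes, after dividing by the appropriate power of $\sqrt{a}$, the recursion for the classical Fibonacci polynomials evaluated at $2it$. More precisely, I would prove by induction on $s$ the identity
\begin{equation*}
F_s(a, 2\sqrt{a}\,t) = (\sqrt{a})^{\,s-1} F_s(2it),
\end{equation*}
where $F_s(x) = F_s(-1,x)$ is the classical Fibonacci polynomial. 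The base cases $s=0,1$ are immediate, and the inductive step is a one-line manipulation: substituting $x = 2\sqrt{a}\,t$ into the recursion and factoring out $(\sqrt{a})^{s-1}$ reduces exactly to $F_{s+2}(2it) = (2it)F_{s+1}(2it) + F_s(2it)$, which is the classical recursion with argument $2it$.

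Next, since $(\sqrt{a})^{s-1} \neq 0$, the roots of $x \mapsto F_s(a,x)$ are precisely the values $x = 2\sqrt{a}\,t$ where $t$ runs over the roots of $x \mapsto F_s(2ix)$; equivalently, where $2it$ runs over the roots of the classical $F_s$. By Remark 4.1, the classical polynomial $F_s$ has the $s-1$ distinct roots $2i\cos\frac{j\pi}{s}$ for $1 \le j \le s-1$. Setting $2it = 2i\cos\frac{j\pi}{s}$ gives $t = \cos\frac{j\pi}{s}$, hence $x_j = 2\sqrt{a}\cos\frac{j\pi}{s}$ for $1 \le j \le s-1$, which is exactly the claimed list.

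Finally I would check distinctness. Since $\deg F_s(a,x) = s-1$ (noted just before the proposition) and we have exhibited $s-1$ candidate roots, it suffices to see they are pairwise distinct. The map $j \mapsto \cos\frac{j\pi}{s}$ is strictly decreasing on $\{1,\dots,s-1\}$, so the values $\cos\frac{j\pi}{s}$ are distinct; multiplying by the nonzero constant $2\sqrt{a}$ preserves distinctness. Hence $F_s(a,x)$ has exactly $s-1$ distinct roots, as listed.

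I do not anticipate a serious obstacle here; the only point requiring mild care is the bookkeeping of the square roots — one must fix a single branch of $\sqrt{a}$ throughout and note that the final answer $2\sqrt{a}\cos\frac{j\pi}{s}$ is independent of the choice up to relabeling $j \leftrightarrow s-j$ (since $\cos\frac{(s-j)\pi}{s} = -\cos\frac{j\pi}{s}$), so the root set is genuinely well-defined. Alternatively, if one prefers to avoid invoking Remark 4.1, one can substitute $t = \cos\theta$ directly and show $F_s(a, 2\sqrt{a}\cos\theta) = (\sqrt{a})^{s-1}\frac{\sin s\theta}{\sin\theta}$ by induction using the product-to-sum identity, from which the roots $\theta = \frac{j\pi}{s}$ read off immediately; this is the route I would actually write out if the Chebyshev connection in Remark 4.1 is not proved in full detail elsewhere.
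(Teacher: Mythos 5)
Your approach is the same as the paper's: a linear change of variable turns $F_s(a,x)$ into a scalar multiple of the classical Fibonacci polynomial, whose roots are quoted from Remark~4.1. However, the scaling identity you propose to prove by induction, $F_s(a,2\sqrt{a}\,t)=(\sqrt{a})^{s-1}F_s(2it)$, is false as stated: already for $s=2$ the left side is $2\sqrt{a}\,t$ while the right side is $2i\sqrt{a}\,t$, and in the inductive step the recursion $F_{s+2}(a,x)=xF_{s+1}(a,x)-aF_s(a,x)$ produces $2tF_{s+1}(2it)-F_s(2it)$ rather than the classical $2it\,F_{s+1}(2it)+F_s(2it)$, so the induction does not close. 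The correct constant is $(-i\sqrt{a})^{s-1}$, i.e.\ $F_s(a,2\sqrt{a}\,t)=(-i\sqrt{a})^{s-1}F_s(2it)$ --- equivalently the paper's substitution $x=by$ with $b=-i\sqrt{a}$, which gives $F_s(a,x)=b^{s-1}F_s(y)$. Since only the nonvanishing of the prefactor matters, your deduction of the root set and your (welcome, and left implicit in the paper) distinctness check via the strict monotonicity of $\cos$ go through verbatim once the constant is repaired. Curiously, your fallback identity $F_s(a,2\sqrt{a}\cos\theta)=(\sqrt{a})^{s-1}\sin(s\theta)/\sin\theta$ \emph{is} correct as written, because the factor $(-i)^{s-1}$ cancels against the $i^{s-1}$ hidden in $F_s(2i\cos\theta)=i^{s-1}\sin(s\theta)/\sin\theta$; so that route would have avoided the slip.
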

\begin{proof}
Let $x=by$ and $b=-i\sqrt{a}\in \mathbb{C}.$  Then
$$
F_s(a, x)=b^{s-1}F_s(-1,y)=b^{s-1}F_s(y).
$$
Note that the roots of $F_s(y)$ are $y_j=2i\, cos \frac{j\pi}{s}, \, \,
\, \, \, 1\leq j\leq s-1.$ It follows that the roots of $F_s(a,x)$
are
$$
x_j=2\sqrt{a}\; cos \frac{j\pi}{s},  \, \, \, \, \, 1\leq j\leq s-1.
$$

\end{proof}

We need the following corollary  in Section 5.
\begin{corollary}
For every $1\leq j\leq s-1$, there exists a $2s$-th root $\eta_j\not=1$
of unity, such that
$$
x_j=2\sqrt{a}\; cos\frac{\pi j}{s}=\sqrt{a}({\eta_j}+\eta_j^{-1}).
$$
\end{corollary}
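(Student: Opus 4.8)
The plan is to start from the explicit root formula in Proposition 4.2, namely $x_j = 2\sqrt{a}\,\cos\frac{\pi j}{s}$ for $1\le j\le s-1$, and simply produce the required root of unity $\eta_j$ by hand. The natural candidate is $\eta_j = e^{i\pi j/s}$, i.e. a primitive-or-not $2s$-th root of unity. First I would observe that $\eta_j^{2s} = e^{2\pi i j} = 1$, so $\eta_j$ is indeed a $2s$-th root of unity, and that $\eta_j \ne 1$ because $0 < j < s < 2s$ means $\pi j/s$ is not an integer multiple of $2\pi$. Then I would compute $\eta_j + \eta_j^{-1} = e^{i\pi j/s} + e^{-i\pi j/s} = 2\cos\frac{\pi j}{s}$ by Euler's formula, so that $\sqrt{a}(\eta_j + \eta_j^{-1}) = 2\sqrt{a}\cos\frac{\pi j}{s} = x_j$, which is exactly the claimed identity.

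The only genuine subtlety — and the step I would flag as requiring a word of care rather than being a real obstacle — is the ambiguity in the symbol $\sqrt{a}$ for a complex number $a$. Since $K$ (hence the coefficient field here, $\mathbb{C}$) is algebraically closed, $a$ has two square roots, and the formula must be read with a fixed but arbitrary choice of $\sqrt{a}$; the identity $x_j = \sqrt{a}(\eta_j + \eta_j^{-1})$ holds verbatim with whichever choice is made, since $\eta_j + \eta_j^{-1}$ is a real scalar independent of that choice and Proposition 4.2 already uses the same convention. So consistency with the earlier statement is automatic and no separate argument is needed.

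Concretely, the proof will read essentially as follows. Set $\eta_j = e^{i\pi j/s}$. Since $1\le j\le s-1$, we have $0<\pi j/s<\pi$, hence $\eta_j\ne 1$, and $\eta_j^{2s}=e^{2\pi i j}=1$, so $\eta_j$ is a $2s$-th root of unity different from $1$. By Euler's formula $\eta_j+\eta_j^{-1}=2\cos\frac{\pi j}{s}$, and therefore
\begin{equation*}
x_j = 2\sqrt{a}\,\cos\frac{\pi j}{s} = \sqrt{a}\,(\eta_j+\eta_j^{-1}),
\end{equation*}
which is the desired equality. This completes the argument; the whole corollary is a direct unpacking of Proposition 4.2 together with the elementary identity $e^{i\theta}+e^{-i\theta}=2\cos\theta$, and I do not anticipate any real difficulty. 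If one wanted to be slightly more invariant, one could alternatively define $\eta_j$ as a root of the quadratic $t^2 - (x_j/\sqrt{a})\,t + 1 = 0$ and check its discriminant is nonzero and that both roots have the form $e^{\pm i\pi j/s}$, but the direct exponential definition is cleaner.
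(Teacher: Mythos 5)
Your proposal is correct and is essentially identical to the paper's own proof: both take $\eta_j=e^{i\pi j/s}$, note it is a $2s$-th root of unity different from $1$ since $1\le j\le s-1$, and apply $e^{i\theta}+e^{-i\theta}=2\cos\theta$. Your extra remark about the choice of square root of $a$ is a reasonable clarification but not needed.
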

\begin{proof}
For each $j$, $1\leq j\leq s-1,$ let $\eta_j=cos \frac{j\pi }{s}+isin \frac{j\pi }{s}=e^{\frac{j\pi i }{s}}.$ Then  $\eta_j\not=1$ is a $2s$-th root of unity and
  $$
 2\sqrt{a}\; cos\frac{\pi j}{s}=\sqrt{a}({\eta_j}+\eta_j^{-1}).
 $$
 \end{proof}

To end this section, we  solve the system of equations determined by
the generating relations of the Green ring $r(H_{n,d}).$ For $0\leq
k\leq n-1,$ let $\omega_k=cos\frac{2k\pi }{n}+isin\frac{2k\pi }{n}$
be an $n$-th root of unity. By Proposition 4.2, all distinct roots
of the generalized Fibonacci polynomial $F_d(\omega_k^m,x)$ are
given by
$$\sigma_{k,j}=2\sqrt{\omega_k^m}\; cos\frac{\pi j}{d}, \,\,\,  1\leq j\leq d-1. $$

We have the following lemma.

\begin{lemma}
 The following system of equations
\begin{equation}\label{system}
 \begin{cases}
 \,\, \, y^n=1,& \\
\, \,  (z-y^m-1)F_d(y^m,z)=0 &\\
\end{cases}
\end{equation}

 has $(d-1)n+m$ distinct solutions in $\mathbb{C}$ , where $m=n/d$, and the solutions are given by
 \begin{equation}\label{solutions}
 \begin{array}{rl}
 \mathfrak{T}=&\{ (\omega_k,2)\, |\, 0\leq k\leq n-1, d|k\}\\
 & \cup\{(\omega_k, \sigma_{k,j}) \, |\, 0\leq k\leq n-1,1\leq j\leq d-1\}.
 \end{array}
 \end{equation}
\end{lemma}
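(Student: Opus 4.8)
The plan is to solve the system \eqref{system} directly by splitting into two cases according to which factor of the second equation vanishes, and then to count the solutions carefully, checking for overlaps. First I would note that the first equation $y^n=1$ forces $y=\omega_k$ for some $0\le k\le n-1$, so in each case $y$ ranges over the $n$-th roots of unity and it remains to determine the admissible values of $z$.

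For the first case, suppose $z-y^m-1=0$, i.e.\ $z=y^m+1=\omega_k^m+1$. Here I would use that $\omega_k^m=q^k$ depends only on $k\bmod d$ (since $\omega^m=q$ has order $d$), and more importantly that $\omega_k^m=1$ precisely when $d\mid k$. I claim this first branch contributes exactly the solutions with $z=2$, i.e.\ the pairs $(\omega_k,2)$ with $d\mid k$: indeed, when $d\mid k$ we get $z=\omega_k^m+1=2$; when $d\nmid k$ the value $z=\omega_k^m+1$ is also a root of $F_d(y^m,z)$ and hence already accounted for in the second branch, so to avoid double-counting I would only record $(\omega_k,2)$ with $d\mid k$ as genuinely new. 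To see that $\omega_k^m+1$ is a root of $F_d(\omega_k^m,z)$ when $d\nmid k$: by Corollary~4.3 the roots of $F_d(a,z)$ are $\sqrt{a}(\eta_j+\eta_j^{-1})$ with $\eta_j=e^{j\pi i/d}$, $1\le j\le d-1$; and one checks that $y^m+1$ arises this way — this is essentially the identity $[M(2,0)][M(d,0)]=(1+[M(1,-1)]^m)[M(d,0)]$ from Lemma~3.1(3) reflected at the level of roots, and amounts to solving $\sqrt{a}(\eta+\eta^{-1})=a+1$, which has the solution $\eta=\sqrt a$ (a $2d$-th root of unity distinct from $1$ exactly when $a=q^k\ne1$).

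For the second case, suppose $F_d(y^m,z)=0$ with $y=\omega_k$. By Proposition~4.2 applied with $a=\omega_k^m$, the polynomial $F_d(\omega_k^m,z)$ has exactly $d-1$ distinct complex roots, namely $\sigma_{k,j}=2\sqrt{\omega_k^m}\cos\frac{\pi j}{d}$ for $1\le j\le d-1$. (One must pick a square root $\sqrt{\omega_k^m}$, but the resulting set of $d-1$ roots is independent of that choice, since replacing $\sqrt a$ by $-\sqrt a$ permutes the $\sigma_{k,j}$ via $j\mapsto d-j$.) This gives $(d-1)n$ pairs $(\omega_k,\sigma_{k,j})$, and I would argue these are pairwise distinct: for fixed $k$ the $\sigma_{k,j}$ are distinct in $j$ by Proposition~4.2, and pairs with different $k$ have different first coordinate.

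Finally I would assemble the count. The second branch gives $(d-1)n$ solutions. The first branch gives, as genuinely new solutions, the pairs $(\omega_k,2)$ with $d\mid k$; there are $n/d=m$ such values of $k$ in $\{0,1,\dots,n-1\}$. These two families are disjoint: in the second family $z=\sigma_{k,j}=2\sqrt{\omega_k^m}\cos(\pi j/d)$, and for $d\mid k$ (so $\omega_k^m=1$) this equals $2\cos(\pi j/d)\ne 2$ since $1\le j\le d-1$; for $d\nmid k$ the first coordinate is not of the form $\omega_k$ with $d\mid k$. Hence the total is $(d-1)n+m$ distinct solutions, exactly the set $\mathfrak{T}$ in \eqref{solutions}. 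The main obstacle I anticipate is the bookkeeping around the first branch — specifically, proving cleanly that $y^m+1$ is always a root of $F_d(y^m,z)$ when $y^m\ne 1$ (so that the only new contributions come from $d\mid k$), and making sure no solution is counted twice across the two branches; everything else is a direct application of Proposition~4.2 and Corollary~4.3.
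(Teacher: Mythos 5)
Your proposal is correct and follows essentially the same route as the paper: both arguments reduce to Proposition~4.2 and Corollary~4.3, and the key step in each is the identity $1+\omega_k^m=\sqrt{\omega_k^m}\bigl(\sqrt{\omega_k^m}+\sqrt{\omega_k^m}^{-1}\bigr)$, showing that for $d\nmid k$ the root of the linear factor is already among the $\sigma_{k,j}$, while for $d\mid k$ the value $2$ is not, since $2\cos(\pi j/d)\ne 2$ for $1\le j\le d-1$. The resulting count $md+(n-m)(d-1)=(d-1)n+m$ matches the paper's.
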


\begin{proof}
By Proposition 4.2, it is not difficult to see that the solutions of
the system (\ref{system}) are as follows:
$$
 \{ (\omega_k, 1+\omega_k^m), (\omega_k, \sigma_{k,j}) \, |\, 0\leq k\leq n-1, 1\leq j\leq d-1\}.
 $$
Thanks to Corollary 4.3, the roots of the generalized Fibonacci
polynomial $F_d(\omega_k^m, x)$ can be written as
$$
\sigma_{k,j}=2\sqrt{{\omega_k}^m}\; cos\frac{\pi
j}{d}=\sqrt{\omega_k^m}({\eta_j}+\eta_j^{-1}),
$$
where $\eta_{j}=cos\frac{j\pi }{d}+isin\frac{j\pi }{d}$ is a $2d$-th root of unity,  and  $1\leq j\leq d-1$. Now we divide $k$, $0\leq k\leq n-1$, in two cases. \\
Case 1:  $d\mid k$.  In this case, for $1\leq j\leq d-1$ we have $\sigma_{k,j}\not=2$.   Thus, the following set gives  $md$ distinct roots of the system (\ref{system}):
 $$
 \{ (\omega_k,2), (\omega_k, \sigma_{k,j}) \, |\, 0\leq k\leq n-1, d|k, 1\leq j\leq d-1\}.
 $$
Case 2: $d\nmid k$.  In this case,  $\eta_{k}=\sqrt{\omega_k^m}\not=1$. So we have
$$
\sigma_{k,k}=\sqrt{\omega_k^m}({\eta_k}+\eta_{k}^{-1})=
\sqrt{\omega_k^m}(\sqrt{\omega_k^m}+\sqrt{\omega_k^{-m}})=1+\omega_k^m.
$$
Since the generalized Fibonacci polynomial $F_d(\omega_k^m, z)$ possesses
$d-1$ distinct roots $\sigma_{k,j}$ for each $k,$ this implies that
for each $\omega_{k}^m\not=1$, the system (\ref{system})
%$$ \begin{cases}
% \,\, \, y^n=1,& \\
%\, \,  (z-y^m-1)F_d(y^m,z)=0 &\\
%\end{cases}
%$$
has $d-1$ distinct roots $\{(\omega_k, \sigma_{k,j})\, |\, 1\leq j\leq d-1\}$.\\
To summarize, we have in total
$$md+(n-m)(d-1)=nd-n+m$$
distinct solutions  for the system (\ref{system})  of  equations.
\end{proof}

\section{\bf Nilpotent elements in $r(H_{n,d})$ and representation theory of $R(H_{n,d})$ }

In this section, we determine all nilpotent elements in
the Green ring $r(H_{n,d})$ and  classify  the  finite
dimensional indecomposable representations over the Green algebra
$R(H_{n,d})$. We first list the irreducible representations of the Green algebra $R(H_{n,d}).$

\begin{theorem}
Let $d\geq 2.$ Then the Green algebra $R(H_{n,d})$ has exactly $nd-n+m$
irreducible modules and each irreducible module is 1-dimensional.
\end{theorem}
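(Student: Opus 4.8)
The plan is to identify the isomorphism classes of simple $R(H_{n,d})$-modules with the points of the affine variety in $\mathbb{C}^2$ cut out by the two defining relations, and then to read off the count from Lemma~4.4.

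First I would recall from Theorem~3.4 that, after extending scalars to $\mathbb{C}$, $R(H_{n,d})\cong\mathbb{C}[y,z]/I$, where $I$ is generated by $y^n-1$ and $(z-y^m-1)F_d(y^m,z)$, and that (from the proof of Theorem~3.4) this algebra is free of rank $nd$ as a module, hence finite-dimensional over $\mathbb{C}$. Being a commutative Artinian $\mathbb{C}$-algebra, every simple $R(H_{n,d})$-module is of the form $R(H_{n,d})/\mathfrak{m}$ for a maximal ideal $\mathfrak{m}$; since $R(H_{n,d})/\mathfrak{m}$ is then a finite field extension of the algebraically closed field $\mathbb{C}$, it must equal $\mathbb{C}$. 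Thus every simple module is $1$-dimensional, and the isomorphism classes of simple modules are in bijection with the set of maximal ideals of $R(H_{n,d})$, equivalently with the $\mathbb{C}$-algebra homomorphisms $\chi\colon\mathbb{C}[y,z]/I\to\mathbb{C}$.

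Next I would make this correspondence completely explicit. Such a $\chi$ is determined by the pair $(\chi(y),\chi(z))\in\mathbb{C}^2$, and a pair $(y_0,z_0)$ extends to an algebra homomorphism precisely when it kills the generators of $I$, that is, precisely when $(y_0,z_0)$ is a solution of the system (\ref{system}); conversely distinct solutions give distinct characters and hence pairwise non-isomorphic $1$-dimensional modules, a $1$-dimensional module being determined up to isomorphism by the scalars by which $y$ and $z$ act. So the number of simple modules equals the cardinality of the solution set $\mathfrak{T}$ of (\ref{system}), which by Lemma~4.4 is $(d-1)n+m=nd-n+m$. This establishes both assertions.

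I expect the argument to be essentially routine; the only step worth pausing on is the reduction ``simple module $\leftrightarrow$ maximal ideal $\leftrightarrow$ point of the variety'', which uses nothing more than commutativity, finite-dimensionality over $\mathbb{C}$, and the Nullstellensatz (or, in this case, the direct description of the solutions already obtained in Lemma~4.4). The genuine content --- counting the solutions of (\ref{system}) --- has already been carried out in Section~4, so no new obstacle arises at this stage.
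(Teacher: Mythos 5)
Your proposal is correct and follows essentially the same route as the paper: both reduce to the observation that a finite-dimensional commutative $\mathbb{C}$-algebra has only $1$-dimensional irreducibles, identify these with the solutions of the system (\ref{system}), and invoke Lemma~4.4 for the count $nd-n+m$. Your write-up merely makes the maximal-ideal/character correspondence more explicit than the paper does.
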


\begin{proof}
By Theorem 3.4, we know that the Green algebra $R(H_{n,d})$ of $H_{n,d}$
is commutative and isomorphic to $\mathbb{C}[y,z]/I$, where the
ideal $I$ is generated by the following elements
$$
y^n-1,\,\,\, (z-y^m-1)F_d(y^m,z).
$$
Since $\mathbb{C}$ is an algebraically closed field of characteristic 0, the dimension of each irreducible module is 1. By Lemma 4.4,  we know that the system (\ref{system})
%$$ \begin{cases}
% y^n=1,& \\
% (z-y^m-1)F_d(y^m,z)=0 &\\
%\end{cases}
%$$
 has $nd-n+m$ distinct solutions given by $\mathfrak{T}$ in (\ref{solutions}).
%\begin{align*}
% \mathfrak{T}=&\{ (\omega_k,2)\, |\, 0\leq k\leq n-1, d|k\}\\
% & \cup\{(\omega_k, \sigma_{k,j}) \, |\, 0\leq k\leq n-1,1\leq j\leq d-1\}.
% \end{align*}
For each solution $(\lambda,\mu)\in \mathfrak{T},$ one can define an
irreducible $R(H_{n,d})$-module $\mathbb{C}_{\lambda,\mu}$ on the vector space $\mathbb{C}$  by $y\cdot 1=\lambda, z\cdot 1=\mu.$

It is clear that $(\lambda,\mu)\mapsto \mathbb{C}_{\lambda,\mu}$ gives a one to one correspondence between the set of solutions for the system (\ref{system}) and the set of the isomorphism classes of irreducible modules over $R(H_{n,d})$.
\end{proof}

 \begin{theorem}
Let $d\geq 2.$  The set of nilpotent elements in $r(H_{n,d})$ is equal to
$$
\langle [M(d,i)]-[M(d,j)]\, |\, i\equiv j (mod\,\; m)\rangle,
$$
that is, the Jacobson radical $J(r(H_{n,d}))$ of $r(H_{n,d})$ has a
$\mathbb{Z}$-basis
$$
\{ [M(d,im+j)]-[M(d,(i-1)m+j)]\,|\, 1\leq i\leq  d-1, 0\leq j\leq m-1\}.$$
\end{theorem}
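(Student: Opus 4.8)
The plan is to detect nilpotence in the Green algebra $R(H_{n,d})$ through its finitely many one-dimensional characters, and then transfer the conclusion back to $r(H_{n,d})$ by a rank count together with a saturation argument.

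First I would check that the proposed generators are nilpotent. By Theorem 3.4 one may write $[M(d,i)]=y^{-i}F_d(y^m,z)$ inside $R(H_{n,d})\cong\mathbb{C}[y,z]/I$, so for a point $(\lambda,\mu)\in\mathfrak{T}$ the associated character of $R(H_{n,d})$ (see the proof of Theorem 5.1) sends $[M(d,i)]$ to $\lambda^{-i}F_d(\lambda^m,\mu)$. If $(\lambda,\mu)=(\omega_k,\sigma_{k,j})$ this is $0$ by Proposition 4.2, and if $(\lambda,\mu)=(\omega_k,2)$ with $d\mid k$ then $\lambda^m=\omega_k^m=1$ and $F_d(1,2)=d$ (immediate from $F_0(1,2)=0$, $F_1(1,2)=1$ and $F_{s+2}(1,2)=2F_{s+1}(1,2)-F_s(1,2)$), giving the value $d\,\omega_k^{-i}$. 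Hence, when $i\equiv j\pmod m$, every character of $R(H_{n,d})$ kills $[M(d,i)]-[M(d,j)]$: at the points $(\omega_k,\sigma_{k,j})$ both terms are $0$, and at $(\omega_k,2)$ with $d\mid k$ one has $\omega_k^m=1$, so $\omega_k^{-i}=\omega_k^{-j}$. As $R(H_{n,d})$ is a finite-dimensional commutative algebra whose maximal ideals are precisely the kernels of these characters, it follows that $[M(d,i)]-[M(d,j)]\in\mathrm{rad}(R(H_{n,d}))$, hence it is nilpotent already in $r(H_{n,d})$.

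Next I would carry out a rank count. Let $N\subseteq r(H_{n,d})$ be the $\mathbb{Z}$-span of the elements $v_{i,j}:=[M(d,im+j)]-[M(d,(i-1)m+j)]$ with $1\le i\le d-1$ and $0\le j\le m-1$. Indexing $\mathbb{Z}_n$ by the pairs $(i,j)$ via $r=im+j$ (legitimate since $n=dm$), the passage from the standard $\mathbb{Z}$-basis $\{[M(l,r)]\}$ of $r(H_{n,d})$ to the family $\{[M(d,j)]\}_{0\le j\le m-1}\cup\{v_{i,j}\}\cup\{[M(l,r)]\}_{1\le l\le d-1}$ is, in each block of fixed $j$, a unipotent lower-triangular change of variables; so this family is again a $\mathbb{Z}$-basis, $N$ is a direct summand of $r(H_{n,d})$, free of rank $(d-1)m=n-m$, and $r(H_{n,d})/N$ is free abelian. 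On the other side, by Theorem 5.1 the quotient $R(H_{n,d})/\mathrm{rad}(R(H_{n,d}))$ is a product of $nd-n+m$ copies of $\mathbb{C}$, so $\dim_{\mathbb{C}}\mathrm{rad}(R(H_{n,d}))=nd-(nd-n+m)=n-m$. Since $N\otimes_{\mathbb{Z}}\mathbb{C}\subseteq\mathrm{rad}(R(H_{n,d}))$ by the previous step and likewise has $\mathbb{C}$-dimension $n-m$, I would conclude $N\otimes_{\mathbb{Z}}\mathbb{C}=\mathrm{rad}(R(H_{n,d}))$.

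Finally I would descend. The set of nilpotent elements of the commutative ring $r(H_{n,d})$ is $r(H_{n,d})\cap\mathrm{rad}(R(H_{n,d}))=r(H_{n,d})\cap(N\otimes_{\mathbb{Z}}\mathbb{C})$, and because $N$ is a direct summand of $r(H_{n,d})$ this intersection equals $N$; thus $J(r(H_{n,d}))=N$ has the stated $\mathbb{Z}$-basis. Moreover $N$ is contained in the ideal $\langle[M(d,i)]-[M(d,j)]\mid i\equiv j\pmod m\rangle$ while that ideal, being generated by nilpotent elements, is contained in the nilradical $N$, so the two coincide. I expect the main obstacle to be the first step: pinning down $F_d(1,2)=d$ and verifying that the ``exceptional'' characters $(\omega_k,2)$ with $d\mid k$ do not obstruct nilpotence — equivalently, that $\mathrm{rad}(R(H_{n,d}))$ is supported on the projective classes $[M(d,\cdot)]$ and has dimension equal to the number of those classes identified modulo $m$.
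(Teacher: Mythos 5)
Your proof is correct, but it reaches the key nilpotence step by a genuinely different route from the paper. The paper works directly with modules: it uses Proposition 2.4 to compute $M(d,i)\otimes M(d,j)\cong\bigoplus_{s=0}^{d-1}M(d,i+j-sm)$, deduces $[M(d,i)]^2=[M(d,j)]^2=[M(d,i)][M(d,j)]$ when $i\equiv j\ (\mathrm{mod}\ m)$, and hence gets the explicit identity $([M(d,i)]-[M(d,j)])^2=0$ --- a stronger statement (square-zero, visible already in $r(H_{n,d})$ without complexifying) than your conclusion that the element lies in $\mathrm{rad}(R(H_{n,d}))$ and is therefore nilpotent of some order. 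Your route instead evaluates $[M(d,i)]=y^{-i}F_d(y^m,z)$ at all points of $\mathfrak{T}$, using $F_d(1,2)=d$ at the exceptional characters; this is a clean ``spectral'' detection of the radical that needs no tensor-product computation beyond Theorem 3.4. For the second half your argument is actually tighter than the paper's: the paper shows the proposed set is $\mathbb{Z}$-independent and that the rank of $J(r(H_{n,d}))$ equals $n-m$, and from equality of ranks concludes it is a basis --- but a free submodule of full rank need not be all of $J$ (compare $2\mathbb{Z}\subset\mathbb{Z}$), so strictly one still needs saturation. Your observation that the $v_{i,j}$ extend to a $\mathbb{Z}$-basis of $r(H_{n,d})$ by a unipotent change of variables, so that $N$ is a direct summand and $r(H_{n,d})\cap(N\otimes_{\mathbb{Z}}\mathbb{C})=N$, closes exactly this gap. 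In short: the paper's approach buys an explicit square-zero identity from representation theory; yours buys a more robust and fully rigorous identification of the radical as a direct summand.
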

\begin{proof}
By Proposition 2.4 we  have the $H_{n,d}$-modules decomposition
$$
M(d,0)\ot M(d, 0)\cong \bigoplus\limits_{i=0}^{d-1}M(d, -im).$$
Thus, for $i,j\in \mathbb{Z}_n$, we get
\begin{align*}
M(d,i)\ot M(d, j)&\cong M(1,i)\ot M(1,j)\ot M(d, 0)\ot M(d, 0)\\
&\cong M(1,i+j)\ot \bigoplus\limits_{s=0}^{d-1}M(d, -sm)\\
&\cong \bigoplus\limits_{s=0}^{d-1}M(d, i+j-sm).
\end{align*}
This implies that if $i\equiv j (mod\ m)$, then we have
$$
M(d,i)\ot M(d, i)\cong M(d,j)\ot M(d, j)\cong
\bigoplus\limits_{s=0}^{d-1}M(d, i+j -sm)\cong M(d,i)\ot M(d, j).
$$
It follows that in the Green ring $r(H_{n,d})$
$$
[M(d,i)]^2=[M(d, j)]^2=[M(d,i)][M(d, j)]=[M(d, j)][M(d,i)]
$$
for $i\equiv j (mod\ m)$. Therefore, if $i\equiv j (mod\ m)$, we
have that $$([M(d, i)]-[M(d,j)])^2=0,$$
i.e., $0\not=[M(d,
i)]-[M(d,j)]$ is an nilpotent element in $r(H_{n,d})$. Since
$\mathbb{Z}$ is a principal ideal integral domain and hence each
submodule of free $\mathbb{Z}$-module is free. Note that the set
$$ \{ [M(d,im+j)]-[M(d,(i-1)m+j)]\,|\, 1\leq i \leq  d-1, 0\leq
j\leq m-1\}\subset J(r(H_{n,d}))$$ is independent over $\mathbb{Z}.$
Hence the rank $r$ of $J(r(H_{n,d}))$ as $\mathbb{Z}$-module  is
large or equal to $(d-1)m=n-m.$ On the other hand, it is easy to see
from Theorem 5.1 that the dimension
$dim_{\mathbb{C}}(J(R(H_n)))=n-m.$ But the fact that
$dim_{\mathbb{C}}(J(R(H_n)))\geq r$ is clear, and hence $r=n-m.$
Thus, the set
$$
\{ [M(d,im+j)]-[M(d,(i-1)m+j)]\,|\, 1\leq i\leq  d-1, 0\leq j\leq m-1\}
 $$ forms a $\mathbb{Z}$-basis of $J(r(H_{n,d}))$.
\end{proof}
As a consequence, we obtain the following:
\begin{corollary}
The Jacobson radical $J(r(H_{n,d}))$ is a principal ideal generated
by the element $([M(1,m)]-1)[M(d,0)].$
\end{corollary}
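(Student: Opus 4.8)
The plan is to read the statement off the explicit $\mathbb{Z}$-basis of $J(r(H_{n,d}))$ produced in Theorem~5.2: I would show that every basis vector listed there is a multiple of $\xi:=([M(1,m)]-1)[M(d,0)]$, and conversely that $\xi$ itself is nilpotent. To prepare, write $g:=[M(1,-1)]$. By Lemma~3.1(1) we have $g^{n}=1$ and $[M(d,r)]=g^{\,n-r}[M(d,0)]$ for all $r\in\mathbb{Z}_n$, and (since $M(1,0)$ is the trivial module) $[M(1,m)]=g^{\,n-m}=g^{-m}$; hence $\xi=(g^{-m}-1)[M(d,0)]$, equivalently $(1-g^{m})[M(d,0)]=g^{m}\xi$.

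For the inclusion $J(r(H_{n,d}))\subseteq(\xi)$ I would compute, for $1\leq i\leq d-1$ and $0\leq j\leq m-1$,
\begin{equation*}
[M(d,im+j)]-[M(d,(i-1)m+j)]=g^{\,n-im-j}\bigl(1-g^{m}\bigr)[M(d,0)]=g^{\,n-(i-1)m-j}\,\xi ,
\end{equation*}
so every one of the $(d-1)m=n-m$ $\mathbb{Z}$-basis vectors of $J(r(H_{n,d}))$ from Theorem~5.2 lies in the principal ideal $(\xi)$, and therefore so does all of $J(r(H_{n,d}))$. For the reverse inclusion, Lemma~2.2 gives $[M(1,m)][M(d,0)]=[M(d,m)]$, so $\xi=[M(d,m)]-[M(d,0)]$; this is exactly the $(i,j)=(1,0)$ vector of the Theorem~5.2 basis — equivalently, $\xi$ is nilpotent by Theorem~5.2 because $m\equiv 0\pmod m$ — whence $\xi\in J(r(H_{n,d}))$, and since $J(r(H_{n,d}))$ is an ideal, $(\xi)\subseteq J(r(H_{n,d}))$. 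Combining the two inclusions yields $J(r(H_{n,d}))=(\xi)$, the principal ideal generated by $([M(1,m)]-1)[M(d,0)]$.

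The only delicate point is routine index bookkeeping: one checks that as $(i,j)$ runs over $1\leq i\leq d-1$, $0\leq j\leq m-1$, the integer $(i-1)m+j$ runs exactly once over $0,1,\dots,n-m-1$, so that the displayed identity really does account for \emph{all} the Theorem~5.2 basis vectors (indeed it exhibits them as the translates of $\xi$ by the units $g^{\,n-(i-1)m-j}$ of $r(H_{n,d})$). Beyond this I anticipate no genuine obstacle; the whole argument is a direct application of Lemmas~2.2 and~3.1 together with Theorem~5.2.
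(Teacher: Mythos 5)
Your proposal is correct and follows essentially the same route as the paper: both express each basis vector $[M(d,im+j)]-[M(d,(i-1)m+j)]$ of $J(r(H_{n,d}))$ from Theorem~5.2 as a unit (a power of $[M(1,-1)]$) times $([M(1,m)]-1)[M(d,0)]$ and then invoke Theorem~5.2. Your explicit check of the reverse inclusion $(\xi)\subseteq J(r(H_{n,d}))$ is a point the paper leaves implicit, but the argument is the same.
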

\begin{proof} For $1\leq i \leq d-1, 0\leq j\leq m-1,$ by Lemma 3.1, we have
\begin{align*}
&[M(d,im+j)]-[M(d,(i-1)m+j)]\\
=& ([M(1,im+j)]-[M(1,(i-1)m+j)])[M(d,0)]\\
=&[M(1,(i-1)m+j)]([M(1,m)-1])[M(d,0)].
\end{align*}
Note that $[M(1,(i-1)m+j)]$ is invertible in $r(H_{n,d})$. Hence, the assertion follows from Theorem 5.2.
\end{proof}

In the sequel, we shall classify the indecomposable but non-irreducible finite dimensional modules over the Green algebra $R(H_{n,d})$. For each $k,$ $0\leq k\leq n-1$, such that $d\nmid k,$  write
$\omega_k$ for $cos\frac{2k\pi }{n}+isin\frac{2k\pi }{n}$, an $n$-th root
of unity, and let $V(k)$ be a 2-dimensional $\mathbb{C}$-vector space
with a basis $\{v_1, v_2\}.$ Define an action of the Green algebra
$R(H_{n,d})$ on $V(k)$ as follows:
$$
y\cdot v_i=\omega_k v_i,\, z\cdot v_1=(1+\omega_k^m) v_1,  \,  z\cdot v_2=v_1+(1+\omega_k^m) v_2.
$$
Then we have the following.
\begin{lemma}
 Let $0\leq k,t\leq n-1, d\nmid k, d\nmid t.$  Then $V(k)$ is an   indecomposable and reducible module of $R(H_{n,d})$. Moreover,
 $V(k)\cong V(t)$ if and only if $k=t.$
\end{lemma}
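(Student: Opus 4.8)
The plan is to verify directly that $V(k)$ is a module, is reducible, is indecomposable, and then to distinguish the various $V(k)$ up to isomorphism by invariants read off from the action of the generators. First I would check that the prescribed formulas
$$
y\cdot v_i=\omega_k v_i,\quad z\cdot v_1=(1+\omega_k^m)v_1,\quad z\cdot v_2=v_1+(1+\omega_k^m)v_2
$$
do define an $R(H_{n,d})$-module structure. By Theorem~3.4 (or Theorem~5.1) $R(H_{n,d})\cong\mathbb{C}[y,z]/I$ with $I=(y^n-1,\ (z-y^m-1)F_d(y^m,z))$, so it suffices to check that the two matrices $Y=\omega_k\,\mathrm{Id}$ and $Z=\begin{pmatrix}1+\omega_k^m & 1\\ 0 & 1+\omega_k^m\end{pmatrix}$ satisfy $Y^n=\mathrm{Id}$ and $(Z-Y^m-\mathrm{Id})F_d(Y^m,Z)=0$. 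The first is immediate since $\omega_k$ is an $n$-th root of unity. For the second, note $Z-Y^m-\mathrm{Id}$ is the nilpotent Jordan block $N=\begin{pmatrix}0&1\\0&0\end{pmatrix}$, and $F_d(Y^m,Z)=F_d(\omega_k^m\,\mathrm{Id},\,Z)$ is an upper triangular matrix with diagonal entry $F_d(\omega_k^m,1+\omega_k^m)$. Since $d\nmid k$, Corollary~4.3 / Lemma~4.4 (Case 2) give $1+\omega_k^m=\sigma_{k,k}$, a root of $F_d(\omega_k^m,x)$, so that diagonal entry vanishes; hence $F_d(Y^m,Z)$ is strictly upper triangular, so it is a scalar multiple of $N$, and therefore $N\cdot F_d(Y^m,Z)=0$ because $N^2=0$. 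This proves $V(k)$ is a module.

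Next, reducibility is clear: $\mathbb{C}v_1$ is stable under both $y$ and $z$, hence is a proper nonzero submodule, and it is $\cong\mathbb{C}_{\omega_k,\,1+\omega_k^m}$ in the notation of Theorem~5.1. Indecomposability follows because any submodule complement would have to be a one-dimensional $R(H_{n,d})$-stable line on which $z$ acts by a scalar; but the only eigenvector of $Z$ (up to scalar) is $v_1$, since $Z$ is a non-split Jordan block. Equivalently, $\mathrm{End}_{R(H_{n,d})}(V(k))$ consists of matrices commuting with $Y$ and $Z$; commuting with $Z$ forces the endomorphism to be upper triangular with equal diagonal entries (again because $Z$ is a single Jordan block), i.e.\ of the form $\alpha\,\mathrm{Id}+\beta N$, which is a local ring, so $V(k)$ is indecomposable.

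Finally, for the isomorphism classification: if $V(k)\cong V(t)$ then they have the same composition factors, and the unique simple subquotient of $V(k)$ is $\mathbb{C}_{\omega_k,\,1+\omega_k^m}$ (appearing with multiplicity two). Since $(\lambda,\mu)\mapsto\mathbb{C}_{\lambda,\mu}$ is a bijection onto the iso-classes of simples (Theorem~5.1), an isomorphism $V(k)\cong V(t)$ forces $\omega_k=\omega_t$, hence $k=t$ as $0\le k,t\le n-1$. (Alternatively, one reads $\omega_k$ off as the unique eigenvalue of the action of $y$.) The converse is trivial. I expect the only step requiring genuine care is the verification that the defining relations hold, i.e.\ pinning down that $1+\omega_k^m$ is a root of $F_d(\omega_k^m,x)$ precisely when $d\nmid k$ and exploiting $N^2=0$; everything else is formal linear algebra about single Jordan blocks.
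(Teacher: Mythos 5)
Your proof is correct and follows essentially the same route as the paper: the whole argument rests on the observation (from Lemma 4.4, Case 2) that for $d\nmid k$ the value $1+\omega_k^m$ is a double root of $(z-\omega_k^m-1)F_d(\omega_k^m,z)$, so the $2\times 2$ Jordan block satisfies the defining relations and gives an indecomposable, reducible module, with the $V(k)$ distinguished by the scalar $\omega_k$ through which $y$ acts. The paper's proof is just a terser version of yours; your explicit verification of the relations via $N^2=0$ and the local-endomorphism-ring argument are the details the paper leaves as "straightforward."
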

\begin{proof}
 $V(k)$ is indecomposable because $1+\omega_k^m$ is a double root of the
equation
$$(z-\omega_k^m-1)F_d(\omega_k^m, z)=0
$$
by Lemma 4.4 and the action of $z$ corresponds to the following Jordan block
$$\left(
 \begin{array}{cc}
 1+\omega_k^m  & 1  \\
 0  & 1+\omega_k^m \\
 \end{array}\right).
 $$
Observe that $\mathbb{C}v_1$ is a submodule of $V(k)$. Therefore,
$V(k)$ is reducible which is direct from Theorem 5.1. The rest is
straightforward.
\end{proof}

\begin{theorem}
Let $V$ be a finite dimensional indecomposable and reducible module of the  Green algebra $R(H_{n,d}).$ Then there exists $k$,  $0\leq k\leq n-1$, and $d\nmid k$, such that $V\cong V(k).$
\end{theorem}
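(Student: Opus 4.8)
The plan is to reduce the problem to linear algebra over $\mathbb{C}$ via the explicit presentation $R(H_{n,d})\cong \mathbb{C}[y,z]/I$ from \thref{3.4}, and then to invoke the structure theory of finite-dimensional modules over a commutative Artinian $\mathbb{C}$-algebra. First I would observe that $R(H_{n,d})$ is a finite-dimensional commutative $\mathbb{C}$-algebra, hence Artinian, so it decomposes as a product of local rings $R(H_{n,d})\cong \prod_\alpha R_\alpha$, one factor for each maximal ideal, i.e.\ one for each solution $(\lambda,\mu)\in\mathfrak{T}$ of the system \equref{system} by \leref{4.4} and \thref{5.1}. Any indecomposable module $V$ is a module over a single local factor $R_\alpha$, corresponding to a unique solution $(\lambda,\mu)$; concretely $y$ acts on $V$ as $\lambda\cdot\mathrm{id}$ plus a nilpotent part and $z$ acts as $\mu\cdot\mathrm{id}$ plus a nilpotent part, since $V$ has a unique composition factor.

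Next I would pin down the nilpotent parts. Since $y^n=1$ holds in $R(H_{n,d})$ and $\lambda=\omega_k$ is a simple root of $Y^n-1$, the element $y-\lambda$ acts \emph{invertibly}-modulo-nilpotent with the nilpotent part forced to be $0$; hence $y$ acts as the scalar $\omega_k$ on any indecomposable $V$. It remains to analyze the action of $z$. The relation $(z-y^m-1)F_d(y^m,z)=0$ becomes, after substituting $y=\omega_k$, the scalar polynomial relation $(z-\omega_k^m-1)F_d(\omega_k^m,z)=0$ satisfied by the operator $z$ on $V$. If $V$ is reducible and indecomposable, its unique eigenvalue $\mu$ for $z$ must be a \emph{multiple} root of this polynomial (otherwise $z-\mu$ would be invertible-mod-nilpotent and kill the nilpotent part, making $V$ semisimple hence irreducible, contradiction). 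By \leref{4.4}, a multiple root occurs precisely when $d\nmid k$, in which case the unique multiple root is $\mu=1+\omega_k^m$ (it is the coincidence $\sigma_{k,k}=1+\omega_k^m$ established in Case 2 of the proof of \leref{4.4}), and it is a double root. This forces the minimal polynomial of $z$ on $V$ to divide $(z-(1+\omega_k^m))^2$.

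Finally I would rule out higher-dimensional indecomposables. The local factor $R_\alpha$ at the maximal ideal $\mathfrak{m}=(y-\omega_k,\ z-(1+\omega_k^m))$ has maximal ideal satisfying $\mathfrak{m}^2=0$: indeed $(y-\omega_k)=0$ already in $R_\alpha$ as argued above, and $(z-(1+\omega_k^m))^2=0$ there because $1+\omega_k^m$ is exactly a double root, so $\dim_{\mathbb{C}}\mathfrak{m}/\mathfrak{m}^2\le 1$ and $\mathfrak{m}$ is principal with $\mathfrak{m}^2=0$, giving $R_\alpha\cong\mathbb{C}[t]/(t^2)$. Over $\mathbb{C}[t]/(t^2)$ the only indecomposable modules are $\mathbb{C}$ (irreducible) and $\mathbb{C}[t]/(t^2)$ itself (the unique reducible indecomposable, $2$-dimensional). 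Translating back, the unique reducible indecomposable supported at this factor is exactly $V(k)$ as defined before \leref{5.6}, with $z$ acting by the Jordan block displayed there. Hence $V\cong V(k)$ for the appropriate $k$ with $d\nmid k$, and \leref{5.6} already records that these are pairwise non-isomorphic. I expect the only mild obstacle to be the bookkeeping that the multiple root is a genuine \emph{double} (not triple-or-higher) root, so that $\mathfrak{m}^2=0$ rather than merely $\mathfrak{m}$ nilpotent of higher order; this is precisely the content of $F_d(\omega_k^m,x)$ having $d-1$ \emph{distinct} roots (\prref{4.2}) together with the single collision $\sigma_{k,k}=1+\omega_k^m$ from \leref{4.4}, so no root has multiplicity exceeding two.
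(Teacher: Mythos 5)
Your proposal is correct. It reaches the same decisive fact as the paper --- that at $y=\omega_k$ the polynomial $(z-\omega_k^m-1)F_d(\omega_k^m,z)$ has a repeated root only when $d\nmid k$, and then exactly one double root at $1+\omega_k^m$ --- but it packages the argument differently. The paper argues directly with matrices: since $y$ and $z$ commute and $y^n=1$, it chooses a basis in which $y$ acts as the scalar $\omega_k$ and $z$ as a single Jordan block $Z$, and then observes that the relation $(Z-(\omega_k^m+1)E_s)\prod_j(Z-\sigma_{k,j}E_s)=0$, together with the distinctness of the $\sigma_{k,j}$ and the collision $\sigma_{k,k}=1+\omega_k^m$, forces $s=2$ and $\lambda=1+\omega_k^m$. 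You instead decompose the commutative Artinian algebra $R(H_{n,d})$ into local factors indexed by the points of $\mathfrak{T}$, show the factor at a non-reduced point is $\mathbb{C}[t]/(t^2)$, and quote the classification of indecomposables over that ring. The two routes are essentially equivalent in content, but yours has the merit of making explicit why $z$ acts by a \emph{single} Jordan block on an indecomposable module (the paper asserts this; it follows because once $y$ acts as a scalar, $V$ is indecomposable as a $\mathbb{C}[z]$-module), and of isolating cleanly the one point that needs care, namely that the multiple root has multiplicity exactly two so that $\mathfrak{m}^2=0$ in the local factor. Both proofs rely in the same way on Proposition 4.2 and Lemma 4.4.
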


\begin{proof}
Assume that $V$ is a finite dimensional indecomposable module with
dimension $s\geq 2.$ Since the actions of $y$ and $z$ on $V$
are commutative and $y^n=1,$  there exist a basis
$\{v_1, v_2,\cdots, v_s\}$ of $V$ such that with respect to this basis, the matrices  corresponding to the actions  of the generators $y$  and $z$ are respectively $Y=\omega_k E_s$, where
$\omega_k=cos\frac{2k\pi }{n}+isin\frac{2k\pi }{n}$ is an $n-$th
root of unity for some $0\leq k\leq n-1$ and $E_s$ is the $s\times s$ identity  matrix, and the Jordan block:
$$Z=\left(
                           \begin{array}{ccccc}
                             \lambda & 1       &   &  & \\
                              &      \lambda & 1 &  &\\
                              &  & \ddots & \ddots  &\\
                              &  &  & \lambda & 1 \\
                              &   &  &  &   \lambda\\
                           \end{array}
                         \right)_{s\times s}
$$
for some $\lambda\in \mathbb{C}$. Notice that the matrix $Z$ must satisfy the following matrix equation
$$
(Z-(\omega_k^m+1)E_s)\prod\limits_{j=1}^{d-1}(Z-\sigma_{k,j}E_s)=0,
$$
where $\{\sigma_{k,j}\,| \, 1\leq j\leq d-1\}$ is the set of roots
of the generalized Fibonacci polynomial $F_d(\omega_k^m, z),$ see
Proposition 4.2.

Since for each $j$ the matrix $Z-\sigma_{k,j}E_s$ is a Jordan block,
$Z-\sigma_{k,j}E_s$ is invertible if $\lambda\not=\sigma_{k,j}$ .
Moreover, $\sigma_{k,j}$ for $1\leq j\leq d-1$ are distinct and
$\sigma_{k,k}=1+\omega_k^m$ for $d\nmid k$. It follows that
$\omega_k^m\not=1, s=2$  and $\lambda=1+\omega_k^m.$ Therefore, we
have that $V\cong V(k)$ for some $0\leq k\leq n-1$ such that $d\nmid
k.$
\end{proof}

Now we can describe the blocks of the Green algebra $R(H_{n,d}).$
\begin{corollary}
\begin{enumerate}
\item The  set $\{\mathbb{C}_{\lambda,\mu}, V(k) \, | (\lambda,
\mu)\in \mathfrak{T}, 0\leq k\leq n-1, d\nmid k\}$ forms a complete
list of finite dimensional indecomposable representations of
$R(H_{n,d})$ with cardinal number $nd.$ Moreover,
$\mathbb{C}_{\lambda,\mu}$ is projective if and only if
$\mu\not=1+\lambda^m$ and $V(k)$ is projective for each $k.$
 \item There are $nd-2(n-m)$ blocks of dimension $1$ and $n-m$ blocks of dimension $2$. \hfill$\Box$
 \end{enumerate}
\end{corollary}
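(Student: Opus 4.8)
The plan is to assemble the classification from the pieces already established and then count. For part (1), the list of indecomposables splits into two families: the 1-dimensional modules $\mathbb{C}_{\lambda,\mu}$ indexed by $(\lambda,\mu)\in\mathfrak{T}$, which by Theorem 5.1 exhaust all irreducible $R(H_{n,d})$-modules, and the 2-dimensional modules $V(k)$ with $d\nmid k$, which by Theorem 5.5 exhaust all indecomposable reducible modules. Since every finite-dimensional indecomposable module is either irreducible or reducible, the union $\{\mathbb{C}_{\lambda,\mu},V(k)\mid(\lambda,\mu)\in\mathfrak{T},\ 0\leq k\leq n-1,\ d\nmid k\}$ is a complete, non-redundant list (non-redundancy of the $\mathbb{C}_{\lambda,\mu}$ comes from the one-to-one correspondence in Theorem 5.1, and of the $V(k)$ from Lemma 5.4). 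For the cardinality: by Lemma 4.4 there are $nd-n+m$ solutions in $\mathfrak{T}$, and there are exactly $n-m$ values of $k$ in $\{0,\dots,n-1\}$ with $d\nmid k$ (the $m$ multiples of $d$ are excluded), giving $(nd-n+m)+(n-m)=nd$ in total.

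Next I would settle the projectivity statements. Since $R(H_{n,d})\cong\mathbb{C}[y,z]/I$ is a finite-dimensional commutative algebra, it decomposes as a product of local rings, and a module over such an algebra is projective precisely when it is free over the local factor supported at the corresponding maximal ideal; equivalently, an indecomposable is projective iff the local factor it lives over is itself a field (so that the factor has no non-split extensions) — or, more concretely, iff the generalized eigenvalue pair is a \emph{simple} point of the variety cut out by $I$. A point $(\lambda,\mu)\in\mathfrak{T}$ is a simple root of the system exactly when $\mu$ is a simple root of $(z-\lambda^m-1)F_d(\lambda^m,z)$; by the analysis in Lemma 4.4 the only coincidences occur when $\mu=1+\lambda^m=\sigma_{k,k}$ for $d\nmid k$, and at all other points the local factor is $\mathbb{C}$, hence a field. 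Thus $\mathbb{C}_{\lambda,\mu}$ is projective iff $\mu\neq 1+\lambda^m$. For $V(k)$ with $d\nmid k$: the local factor at the maximal ideal $(y-\omega_k,\,z-(1+\omega_k^m))$ is $\mathbb{C}[z]/((z-(1+\omega_k^m))^2)$ — the doubled root contributes a length-2 local piece — and $V(k)$, being 2-dimensional with $z$ acting by a single Jordan block of size $2$, is exactly the regular (free rank-one) module over this factor, hence projective.

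Finally, part (2) is a bookkeeping consequence of the block structure of $R(H_{n,d})$. The algebra is the product of its local factors, one per point of $\mathfrak{T}$; a point contributes a $2$-dimensional block (namely $\mathbb{C}[z]/(z-(1+\omega_k^m))^2$) exactly when it is a doubled root, and contributes a $1$-dimensional block otherwise. By Lemma 4.4 the doubled roots are precisely the pairs $(\omega_k,1+\omega_k^m)$ with $d\nmid k$, of which there are $n-m$. Each such block absorbs the two solutions that collided there, so the number of $1$-dimensional blocks is $(nd-n+m)-2(n-m)=nd-2(n-m)$, and there are $n-m$ blocks of dimension $2$; this matches $\dim_{\mathbb{C}} R(H_{n,d})=nd$ and $\dim_{\mathbb{C}} J(R(H_{n,d}))=n-m$ as found in the proof of Theorem 5.2.

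The main obstacle is the projectivity claim: it requires being careful that ``projective $R(H_{n,d})$-module'' coincides with ``supported over a semisimple (field) local factor'' and that $V(k)$ genuinely realizes the free module over its non-reduced local factor rather than some other length-2 module. Once the block decomposition of $R(H_{n,d})=\mathbb{C}[y,z]/I$ into its local components is made explicit — which follows directly from Lemma 4.4 by grouping the solutions — everything else is a direct count, and I would present it as such rather than redoing the root analysis.
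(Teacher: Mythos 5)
Your overall strategy is exactly the one the paper intends (the corollary is stated there with no written proof): completeness follows from Theorem 5.1 together with Theorem 5.5, the count $(nd-n+m)+(n-m)=nd$ is correct, the identification of $V(k)$ with the free rank-one module over the length-two local factor $\mathbb{C}[z]/\bigl((z-(1+\omega_k^m))^2\bigr)$ is correct, and the block decomposition underlying part (2) is the right mechanism.

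There is, however, a genuine gap at the sentence ``Thus $\mathbb{C}_{\lambda,\mu}$ is projective iff $\mu\neq 1+\lambda^m$.'' What your preceding analysis actually establishes is that $\mathbb{C}_{\lambda,\mu}$ is projective iff $(\lambda,\mu)$ is a \emph{simple} point of the system, and that the doubled points are precisely the pairs $(\omega_k,1+\omega_k^m)$ with $d\nmid k$. When $d\mid k$ one has $\omega_k^m=1$, and the point $(\omega_k,2)\in\mathfrak{T}$ satisfies $\mu=2=1+\lambda^m$; yet $2$ is a simple root of $(z-2)F_d(1,z)$, since $F_d(1,2)=d\neq 0$ and the roots $2\cos(j\pi/d)$ of $F_d(1,z)$ all differ from $2$. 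So the local factor there is $\mathbb{C}$ and $\mathbb{C}_{\omega_k,2}$ \emph{is} projective. The criterion ``$\mu\neq 1+\lambda^m$'' therefore misclassifies these $m$ modules; the correct statement is that $\mathbb{C}_{\lambda,\mu}$ fails to be projective exactly for the $n-m$ pairs $(\omega_k,1+\omega_k^m)$ with $d\nmid k$. This is also forced by your own part (2): only $n-m$ blocks are non-reduced, so only $n-m$ indecomposables can fail to be projective, not $n$. In other words the printed criterion needs the extra clause $\lambda^m\neq 1$, and your ``Thus'' is a non sequitur as written; you should either state the corrected criterion or flag the discrepancy rather than paper over it. A smaller separate slip: in part (2) the displayed identity $(nd-n+m)-2(n-m)=nd-2(n-m)$ is false; the bookkeeping should be either $nd-2(n-m)$ starting from the $nd$ roots counted with multiplicity, or $(nd-n+m)-(n-m)$ starting from the $nd-n+m$ distinct points (one block per point, of which $n-m$ have dimension two). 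The final numbers you report are nevertheless correct.
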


\section{\bf Comparing the Green ring and the projective class ring of $H_{n,d}$ }

In this section, we compare the structure of the Green ring $r(H_{n,d})$
and the structure of the projective class ring $p(H_{n,d}).$  Recall
that the projective class ring $p(H_{n,d})$ of  $H_{n,d}$ is the subalgebra of $r(H_{n,d})$ generated by the
projective and semisimple representations of $H_{n,d}$ (see
\cite{CC} for details).  It is easy to see that $p(H_{n,d})$ has a
$\mathbb{Z}$-basis $\{[M(1,i)], [M(d, i)]\,|\, 0\leq i\leq n-1\}.$
In \cite{CC} Cibils determined the structure of the complexified
projective class algebra $P(H_d)$ of the Tafe algebra $H_d$, which is
isomorphic to $\mathbb{C}^2\times {\mathbb{C}[\epsilon]}^{d-1}$,
where $\mathbb{C}[\epsilon]$ is the algebra of dual numbers
$\mathbb{C}[x]/(x^2)$. Applying the same technique
used on $r(H_{n,d})$ and $R(H_{n,d})$,  we can easily obtain the
following result.

\begin{proposition} Let $p(H_{n,d})$ be the projective class ring of $H_{n,d}$ and assume $d\geq 2.$ Then
 as a $\mathbb{Z}$-algebra, $p(H_{n,d})$ is generated by $[M(1,-1)]$ and $[M(d, 0)]$, and is isomorphic to $\mathbb{Z}[y,z]/I$  with  $I$ generated by the following elements:
$$
y^n-1,\,\,   z^2-(1+y^m+y^{2m}+\cdots+y^{(d-1)m})z.
$$
\end{proposition}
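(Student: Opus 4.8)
The plan is to mimic, in the projective class setting, the computation carried out for the full Green ring in \thref{main}. First I would establish that $p(H_{n,d})$ is generated as a $\mathbb{Z}$-algebra by $y:=[M(1,-1)]$ and $z:=[M(d,0)]$. We know that $p(H_{n,d})$ has the $\mathbb{Z}$-basis $\{[M(1,i)],[M(d,i)]\mid 0\le i\le n-1\}$. By Lemma~3.1(1), $[M(1,i)]=y^{n-i}$ for all $i$, so $\mathbb{Z}\mathbb{Z}_n=\langle y\rangle$ sits inside the subalgebra generated by $y,z$, and again by Lemma~3.1(1), $[M(d,i)]=y^{n-i}[M(d,0)]=y^{n-i}z$. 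Hence all $2n$ basis elements lie in $\mathbb{Z}[y,z]\subseteq p(H_{n,d})$, proving the generation claim.

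Next I would identify the relations. The relation $y^n=1$ is exactly Lemma~3.1(1). For the second relation, multiply out $z^2=[M(d,0)]^2$ using Lemma~3.1(4): $[M(d,0)]^2=\sum_{i=0}^{d-1}[M(d,-im)]$. Converting each summand via $[M(d,-im)]=y^{im}[M(d,0)]=y^{im}z$ gives
$$
z^2=\Big(\sum_{i=0}^{d-1}y^{im}\Big)z=(1+y^m+y^{2m}+\cdots+y^{(d-1)m})z,
$$
which is precisely the stated relation. These two relations therefore hold in $p(H_{n,d})$, so there is a surjective $\mathbb{Z}$-algebra homomorphism $\Psi\colon \mathbb{Z}[y,z]/I\to p(H_{n,d})$ sending $y\mapsto[M(1,-1)]$, $z\mapsto[M(d,0)]$, where $I=(y^n-1,\ z^2-(1+y^m+\cdots+y^{(d-1)m})z)$.

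Finally I would argue that $\Psi$ is injective by a rank count, exactly as in the proof of \thref{main}. Modulo $I$, the relation $y^n=1$ lets us reduce the $y$-degree below $n$, and the relation $z^2=(\sum_i y^{im})z$ lets us reduce any power $z^k$ with $k\ge 2$ back to degree $\le 1$ in $z$ (with coefficients in $\mathbb{Z}[y]/(y^n-1)$). Thus $\mathbb{Z}[y,z]/I$ is spanned over $\mathbb{Z}$ by $\{y^i,\ y^i z\mid 0\le i\le n-1\}$, so it has rank at most $2n$; since $p(H_{n,d})$ is free of rank exactly $2n$ and $\Psi$ is a surjection of finitely generated free $\mathbb{Z}$-modules of the same rank, $\Psi$ is an isomorphism. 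The one point that needs a little care — the main (minor) obstacle — is verifying that the spanning set $\{y^i,y^iz\}$ is actually $\mathbb{Z}$-linearly independent in $\mathbb{Z}[y,z]/I$, i.e.\ that the rank does not drop below $2n$; this follows formally once surjectivity onto the rank-$2n$ module $p(H_{n,d})$ is known, since a surjection $\mathbb{Z}^{2n}\twoheadrightarrow\mathbb{Z}^{2n}$ of free abelian groups is automatically an isomorphism.
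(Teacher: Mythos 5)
Your proposal is correct and follows exactly the route the paper intends: the paper's own proof is just ``Follows from Lemma 3.1,'' applying the same surjection-plus-rank-count template as Theorem~3.4, and you have filled in precisely those details (generation and both relations from Lemma~3.1(1) and (4), then the spanning set $\{y^i, y^iz\}$ of size $2n$ matching the rank of $p(H_{n,d})$). The only cosmetic point is that the final step is really ``a surjection from a $\mathbb{Z}$-module generated by $2n$ elements onto a free $\mathbb{Z}$-module of rank $2n$ is an isomorphism,'' which is what your argument actually uses and is valid.
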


\begin{proof}
Follows from Lemma 3.1.
\end{proof}

Since the nilpotent elements of $r(H_{n,d})$ stem from projective indecomposable modules, we have that the Jacobson radical of $r(H_{n,d})$ is equal to the Jacobson radical of $p(H_{n,d})$.

\begin{corollary} The set of nilpotent elements of $p(H_{n,d})$ is equal to the set of nilpotent elements of $r(H_{n,d}).$ \hfill$\Box$
\end{corollary}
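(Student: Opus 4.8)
The plan is to lean entirely on the explicit description of the nilpotent elements of $r(H_{n,d})$ furnished by Theorem 5.2, together with the fact that $p(H_{n,d})$ sits inside $r(H_{n,d})$ as a unital subring. First I would record an elementary reduction. Since the inclusion $p(H_{n,d})\hookrightarrow r(H_{n,d})$ is an injective ring homomorphism preserving the identity, an element $x\in p(H_{n,d})$ satisfies $x^{k}=0$ in $p(H_{n,d})$ if and only if $x^{k}=0$ in $r(H_{n,d})$ (the two equations are literally the same equation among the same elements). Consequently the set of nilpotent elements of $p(H_{n,d})$ is precisely the intersection of the set of nilpotent elements of $r(H_{n,d})$ with the subring $p(H_{n,d})$. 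Thus the corollary reduces to the single containment that every nilpotent element of $r(H_{n,d})$ already lies in $p(H_{n,d})$.

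To establish this containment I would invoke Theorem 5.2, which exhibits a $\mathbb{Z}$-basis of the set of nilpotent elements of $r(H_{n,d})$ consisting of the differences
\[
[M(d,im+j)]-[M(d,(i-1)m+j)],\qquad 1\le i\le d-1,\ 0\le j\le m-1 .
\]
Each such basis element is an integral combination (with coefficients $1$ and $-1$) of the classes $[M(d,r)]$ of the projective indecomposable $d$-dimensional modules. By the $\mathbb{Z}$-basis $\{[M(1,i)],[M(d,i)]\mid 0\le i\le n-1\}$ of $p(H_{n,d})$ recalled at the start of this section — equivalently, by the relation $[M(d,r)]=[M(1,-1)]^{\,n-r}[M(d,0)]$ from Lemma 3.1(1) — every class $[M(d,r)]$ lies in $p(H_{n,d})$. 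Hence each basis element lies in $p(H_{n,d})$, and since the set of nilpotent elements of $r(H_{n,d})$ is the $\mathbb{Z}$-span of these basis elements, this whole set is contained in $p(H_{n,d})$.

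Combining the two steps closes the argument: the set of nilpotent elements of $p(H_{n,d})$ equals the intersection of the nilpotent elements of $r(H_{n,d})$ with $p(H_{n,d})$, and since that set of nilpotent elements is entirely contained in $p(H_{n,d})$, the intersection is all of it. This yields the asserted equality, and it is the precise ring-theoretic content of the remark preceding the corollary that the nilpotent elements of $r(H_{n,d})$ \emph{stem from projective indecomposable modules}.

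I do not anticipate a substantial obstacle. Once Theorem 5.2 is in hand, the entire proof is bookkeeping about which basis vectors of the nilradical lie in the subring $p(H_{n,d})$. The only point requiring a moment of care is the observation that nilpotency of an element of $p(H_{n,d})$ is detected identically in $p(H_{n,d})$ and in $r(H_{n,d})$, which lets me move freely between \textit{nilpotent in $p$} and \textit{nilpotent in $r$} for elements of $p$; this is immediate because the inclusion is a unital ring monomorphism.
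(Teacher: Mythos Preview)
Your proposal is correct and matches the paper's approach exactly: the paper simply observes (in the sentence preceding the corollary) that the nilpotent elements of $r(H_{n,d})$ stem from projective indecomposable modules, which is your Theorem~5.2 argument, and then states the corollary without further proof. Your write-up makes explicit the two trivial reductions (nilpotency is detected identically in a subring, and the $\mathbb{Z}$-basis of $J(r(H_{n,d}))$ consists of differences of classes $[M(d,r)]\in p(H_{n,d})$) that the paper leaves implicit.
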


The following proposition summarizes the irreducible representations of the complexified projective class algebra
$P(H_{n,d})=\mathbb{C}\ot _{\mathbb{Z}} p(H_{n,d})$.

For each $k$, $0\leq k\leq n-1$,  let $\omega_k=cos\frac{2k\pi }{n}+isin\frac{2k\pi }{n}$ be an $n$-th root of unity. Let $\mathbb{C}_k=\mathbb{C}$ for each $k$, and $\mathbb{C}_{k,d}=\mathbb{C}$ for $d\mid k$, as vector spaces.  Define the actions of $P(H_{n,d})$
on $\mathbb{C}_k$ and $\mathbb{C}_{k,d}$ respectively as follows:
$$
y\cdot 1=\omega_k,\, z\cdot 1=0
$$
and
$$
y\cdot 1=\omega_k,\, z\cdot 1=d
$$
Then we have the following.

\begin{proposition}
\begin{enumerate}
\item There are $n+m$ non-isomorphic irreducible 1-dimensional $P(H_{n,d})$-representations
$$
\{\mathbb{C}_k \, |\, 0\leq k\leq n-1\}\cup\{\mathbb{C}_{k, d}\,\,
|\,  0\leq k\leq n-1, d\mid k\}
$$
\item  There are $n-m$ non-isomorphic reducible indecomposable 2-dimensional $P(H_{n,d})$-representations $V_k$, where
 $0\leq k\leq n-1, d\nmid k$,  and $V_k$ has a basis $\{v_1, v_2\}$
with the module structure given by
$$
y\cdot v_1= \omega_k v_1, y\cdot v_2=\omega_kv_2,\, z\cdot v_1=0, z\cdot v_2=v_1.
$$
\end{enumerate}
Moreover, the set $\{ \mathbb{C}_k, \mathbb{C}_{k,d}, V_i\}$ forms a complete list of non-isomorphic indecomposable $P(H_{n, d})$-representations.
\hfill$\Box$
\end{proposition}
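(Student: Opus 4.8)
The plan is to run the same argument as for $R(H_{n,d})$ in Section~5, now starting from the presentation $P(H_{n,d})\cong\mathbb{C}[y,z]/I$ with $I=\bigl(y^n-1,\ z^2-(1+y^m+\cdots+y^{(d-1)m})z\bigr)$ furnished by Proposition~6.1, together with the fact that $P(H_{n,d})$ is a commutative finite-dimensional algebra over the algebraically closed field $\mathbb{C}$ of characteristic $0$.

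First I would note that in any finite-dimensional $P(H_{n,d})$-module $V$ the generator $y$ acts semisimply, because its minimal polynomial divides $y^n-1$, which has simple roots over $\mathbb{C}$. Since $y$ is central, $V$ decomposes as the direct sum of the $y$-eigenspaces, each of which is a submodule; hence on an indecomposable module $y$ acts as a single scalar $\omega_k=\cos\frac{2k\pi}{n}+i\sin\frac{2k\pi}{n}$ for some $0\le k\le n-1$. Next I would evaluate $c_k:=1+\omega_k^m+\cdots+\omega_k^{(d-1)m}$: since $m=n/d$ the element $\omega_k^m$ is a $d$-th root of unity, so summing the geometric series gives $c_k=d$ when $d\mid k$ and $c_k=0$ when $d\nmid k$. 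Thus on such a module $z$ satisfies $z^2=c_kz$.

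Then I would split into the two cases. If $d\mid k$, then $z(z-d)=0$ has distinct roots, so $z$ acts semisimply and $V$ is a module over $\mathbb{C}[z]/(z(z-d))\cong\mathbb{C}\times\mathbb{C}$; being indecomposable it is one-dimensional with $z$ acting as $0$ or as $d$, i.e.\ $V\cong\mathbb{C}_k$ or $V\cong\mathbb{C}_{k,d}$. If $d\nmid k$, then $z^2=0$, so $z$ acts as a nilpotent operator of square zero and $V$ is a module over the dual numbers $\mathbb{C}[z]/(z^2)$, whose indecomposables are the one-dimensional module with $z=0$ (that is $\mathbb{C}_k$) and the two-dimensional one on which $z$ is a single nilpotent Jordan block (that is $V_k$). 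Counting yields $n$ modules $\mathbb{C}_k$, $m$ modules $\mathbb{C}_{k,d}$ (one for each $k$ divisible by $d$) and $n-m$ modules $V_k$; they are pairwise non-isomorphic because they are separated by the scalar $\omega_k$, by whether $z$ acts as $0$, as $d\neq 0$, or as a rank-one nilpotent, and by dimension. In particular the $n+m$ one-dimensional ones are exactly the irreducibles, $\mathbb{C}v_1$ is a proper submodule of $V_k$ so the $V_k$ are reducible but indecomposable, and the three families together exhaust the set of indecomposable $P(H_{n,d})$-representations.

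I do not expect a genuine difficulty here; once the presentation is available, the evaluation of $c_k$ and the case split are routine. The only points needing a little care are the reduction to ``$y$ acts by a scalar on an indecomposable module'' --- i.e.\ that over this commutative algebra a module is governed by the joint eigenspace decomposition of $y$ together with the Jordan structure of $z$ on each block --- and the verification that no indecomposable of dimension greater than $2$ occurs, which is immediate from $z(z-d)=0$, respectively $z^2=0$, bounding the Jordan block sizes of $z$.
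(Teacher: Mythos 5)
Your proof is correct and follows essentially the route the paper intends: the paper omits the argument, remarking only that one applies to the presentation of Proposition~6.1 the same technique used for $R(H_{n,d})$ in Section~5 (reduce to $y$ acting by a scalar $\omega_k$ on an indecomposable, then read off the structure from the minimal polynomial of $z$), which is exactly what you do. Your explicit evaluation $c_k=d$ for $d\mid k$ and $c_k=0$ for $d\nmid k$, followed by the split into $\mathbb{C}\times\mathbb{C}$ versus the dual numbers $\mathbb{C}[z]/(z^2)$, is the computation the paper leaves to the reader, and your counts $n+m$ and $n-m$ agree with the statement.
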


We finish the paper with a remark on the stable Green ring of
$H_{n,d}$. The stable Green ring was introduced in the study of
Green rings for the modular representation theory of finite groups.
It is a quotient of the Green ring modulo all projective
representations (see\cite{B,EG}).  In our situation, it is easy to
see from Theorem 3.4 that the stable Green ring $St(H_{n,d})$ of
$H_{n,d}$ is generated by $[M(1,-1)]$ and $[M(2,0)]$ and is
isomorphic to $\mathbb{Z}[y,z]/J$, where the ideal $J$ is generated
by the following elements:
$$
y^n-1,\, \,   F_d(y^m,z).
$$
By Theorem 5.2,  the stable Green ring $St(H_{n,d})$ has no
nilpotent elements, and hence it is semiprimitive.

\section*{ACKNOWLEDGMENTS}
\hskip\parindent
The first author would like to thank the Department of Mathematics, University of Hasselt and Stuttgart University
for their hospitality during his visit in 2012. He is grateful to the Belgium FWO for its financial support.
The research was also partially supported by NSF of China (No. 11171291) and Doctorate Foundation of Ministry of Education of China
(No. 200811170001).\\

\end{document}